\title[Refined enumeration of noncrossing chains and hook formulas]
      {Refined enumeration of noncrossing chains and hook formulas}
\subjclass[2000]{05A18, 05E18, 11B68, 20F55.}
\date{}
\author{Matthieu Josuat-Vergès}
\keywords{noncrossing partitions, hook formulas}
\thanks{Supported by the ANR CARMA (ANR-12-BS01-0017)}
\address{CNRS and Institut Gaspard Monge, Université Paris-Est Marne-la-Vallée\\
5 Boulevard Descartes\\
Champs-sur-Marne\\
77454 Marne-la-Vallée cedex 2\\ France}
\email{matthieu.josuat-verges@univ-mlv.fr}
\newtheorem{theo}{Theorem}[section]
\newtheorem{lemm}[theo]{Lemma}
\newtheorem{prop}[theo]{Proposition}
\theoremstyle{definition}
\newtheorem{defi}[theo]{Definition}
\newtheorem{rema}[theo]{Remark}
\DeclareMathOperator{\stab}{Stab}
\DeclareMathOperator{\fix}{Fix}
\begin{document}

\begin{abstract}
In the combinatorics of finite finite Coxeter groups, there is a simple formula giving the number 
of maximal chains of noncrossing partitions. It is a reinterpretation of a result by Deligne
which is due to Chapoton,
and the goal of this article is to refine the formula. First, we prove a one-parameter
generalization, by the considering enumeration of noncrossing chains
where we put a weight on some relations. Second, we consider an equivalence relation
on noncrossing chains coming from the natural action of the group on set partitions, 
and we show that each equivalence class has a simple generating function. 
Using this we recover Postnikov's hook length formula in type A and obtain a variant in type B.
\end{abstract}

\maketitle

\section{Introduction}

%
%
%

Let $W$ be a finite Coxeter group of rank $n$ and $h$ its Coxeter number.
A formula due to Deligne \cite{deligne} states that the number
of factorizations of a Coxeter element as a product of $n$ reflections is 
\[
  \frac{n!}{|W|} h^n.
\]
The value in the case of the symmetric group is $(n+1)^{n-1}$, and this number is also known
to be the number of Cayley trees on $n$ vertices.
Chapoton \cite{chapoton} give another interpretation of Deligne's formula: this number counts the 
maximal chains in the lattice of noncrossing partitions \cite{armstrong}.

Our first goal (in Section~\ref{secncc}) is to prove a one-parameter generalization of this result.
A noncrossing chain is a sequence $\hat0 = \pi_0 \lessdot \pi_1 \lessdot \dots \lessdot \pi_n = \hat 1$
in the lattice of noncrossing partitions. By weighting some of the cover relations in these chains
with a parameter $q$, the refined enumeration turns out to be 
\[
  \frac{n!}{|W|} \prod_{i=1}^n ( d_i + q(h-d_i) )
\]
where the $d_i$'s are the degrees of the group. This is done by generalizing a recursion due to Reading~\cite{reading},
and using known results on Fuss-Catalan numbers \cite{armstrong}.

Our second goal (in Section~\ref{secclassgen}) is to study the equivalence classes of noncrossing chains defined 
as follows. The group $W$ acts naturally on the set partition lattice, and there is an induced action on the
set of maximal chains of set partitions. 
The number of orbits is an integer $K(W)$ that has been calculated in our previous work \cite{josuat}.
The subset of noncrossing chains is not stable under this action, but let us say that two noncrossing chains
are equivalent if they are in the same orbit. We show that the generating function of each equivalence class 
has a simple form as a product.

Eventually (in Section~\ref{sechook}), we show how our results lead to some hook-length formula for trees in 
type A and B, more precisely, in type A we recover Postnikov's hook formula \cite{postnikov,du} and in type B we obtain a variant. 

\section*{Acknowledgement}

We thank the anonymous referee who provided the proof of Proposition~\ref{standard2}
(which in the previous version of the article was proved only for the infinite families, and for
some of the exceptional cases via a computer).

\section{Definitions}

Let $S=\{s_1,\dots,s_n\}$ be the set of simple generators of $W$, and $T$ the set of reflections.
Let $V$ be the standard geometric representation of $W$, i.e. an $n$-dimensional Euclidean space
such that each $t\in T$ is an orthogonal reflection through the hyperplane ${\rm Fix}(t) = \{ v \in V \,:\, t(v)=v \}$.
These hyperplanes are called the {\it reflecting hyperplanes}. In particular,  $H_i = {\rm Fix}(s_i)$
are called the {\it simple hyperplanes}.

\begin{defi}
 Let $\mathcal{P}(W)$ denote the set of (generalized) set partitions, i.e. linear subspaces of $V$ that are an 
 intersection of reflecting hyperplanes. It is partially ordered with reverse inclusion (i.e. $\pi\leq\rho$ 
 if $\rho \subseteq \pi$ as linear subspaces). Let $\mathcal{M}(W)$ denote the set of maximal chains of $\mathcal{P}(W)$.
\end{defi}

For each $\pi \in \mathcal{P}(W)$, we define the {\it stabilizer} and {\it pointwise stabilizer} as, respectively:
\begin{align*}
   \stab(\pi) &= \big\{ w\in W \, : \, w(L)=L \big\}, \\
   \stab^*(\pi)  &= \big\{ w\in W \, : \, \forall x \in L, \, w(x)=x  \big\}.
\end{align*}

In the classical case, an interval partition is a set partition where each block is a set of consecutive integers, 
for example $123|4|56$. 
In the present context, there is a natural generalization (which might have been considered in previous
work, with different terminology).

\begin{defi}
 An element $\pi \in \mathcal{P}(W)$ is an {\it interval partition} if it is an intersection of simple hyperplanes.
 Let $\mathcal{P}^I(W) \subseteq \mathcal{P}(W)$ denote the set of interval partitions, and 
 $\mathcal{M}^I(W) \subset \mathcal{M}(W)$ denote the set of maximal chains in $\mathcal{P}^I(W)$.
\end{defi}

The set $\mathcal{P}^I(W)$ is a sublattice of $\mathcal{P}(W)$ and is isomorphic to a boolean lattice.
It follows that $\mathcal{M}^I(W)$ has cardinality $n!$. 
The coatoms of $\mathcal{P}^I(W)$ are exactly the lines $L_1,\dots,L_n$ defined by:
\begin{equation} \label{defli}
  L_i = \bigcap\limits_{ \substack{ 1\leq j \leq n \\[1mm] j\neq i } } H_j.
\end{equation}
Let $W_{(i)}$ denote the (standard maximal parabolic) subgroup of $W$ generated by the $s_j$ with $j \neq i$.
Then $W_{(i)} = \stab^*(L_i)$.

We will need the following fact (see \cite[Proposition~3.3]{josuat}) where $w_0$ denote the longest element
of $W$ (with respect to the simple generators $s_i$ and the associated length function).

\begin{prop} \label{wli}
Each line $L\in\mathcal{P}(W)$ can be written $w(L_i)$ for some $w\in W$ and $1\leq i \leq n$.
If $w\in W$ and $i\neq j$, then $w(L_i)=L_j$ implies $w_0(L_i)=L_j$.
\end{prop}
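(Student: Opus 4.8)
The plan is to deduce both assertions from two classical facts about $W$: (i) the pointwise stabilizer of any subspace of $V$ is a parabolic subgroup, in particular $W$-conjugate to a standard parabolic; and (ii) writing $\bar C$ for the closure of the fundamental chamber (relative to $s_1,\dots,s_n$), $\bar C$ is a strict fundamental domain for $W$ acting on $V$ — every $W$-orbit meets it in exactly one point — and $w_0(\bar C)=-\bar C$.

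For the first assertion, let $L\in\mathcal P(W)$ be a line. Every reflection whose reflecting hyperplane contains $L$ lies in $\stab^*(L)$, so $\fix(\stab^*(L))$ is contained in the intersection of all reflecting hyperplanes containing $L$; as $L$ is by definition such an intersection, and the reverse inclusion is obvious, we get $\fix(\stab^*(L))=L$. Thus $\stab^*(L)$ is a parabolic subgroup with one-dimensional fixed space, hence of rank $n-1$, hence $W$-conjugate to one of the standard maximal parabolics: $\stab^*(L)=wW_{(i)}w^{-1}$ for some $w\in W$ and $1\le i\le n$. Applying $\fix$ and using $\fix(W_{(i)})=\bigcap_{j\ne i}H_j=L_i$ gives $L=\fix(\stab^*(L))=w(L_i)$.

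For the second assertion, set $r_i:=L_i\cap\bar C$; this is one of the two rays of the line $L_i$, and $r_1,\dots,r_n$ are exactly the $n$ extreme rays of $\bar C$. The key observation is: \emph{if $g\in W$ and $g(r_i)\subseteq\bar C$, then $g(r_i)=r_i$}. Indeed, choosing a nonzero $v\in r_i$, both $v$ and $g(v)$ lie in $\bar C$ and in the same $W$-orbit, so $g(v)=v$ by (ii); hence $g$ fixes the line $\mathbb R v=L_i$ pointwise and $g(r_i)=r_i$. Now suppose $w(L_i)=L_j$ with $i\ne j$. By linearity $w(r_i)$ is one of the two rays of $L_j$, and it cannot be $r_j$ — that would give $w(r_i)=r_j\subseteq\bar C$, hence $w(r_i)=r_i$, hence $r_i=r_j$ and $i=j$ — so $w(r_i)=-r_j$. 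On the other hand $-w_0$ maps $\bar C$ onto $\bar C$, hence permutes its extreme rays, so there is a permutation $\sigma$ with $-w_0(r_k)=r_{\sigma(k)}$, equivalently $w_0(r_k)=-r_{\sigma(k)}$ and $w_0(L_k)=L_{\sigma(k)}$, for all $k$. Then $w_0(r_{\sigma^{-1}(j)})=-r_j=w(r_i)$, so $(w_0^{-1}w)(r_i)=r_{\sigma^{-1}(j)}\subseteq\bar C$; the key observation forces $r_i=r_{\sigma^{-1}(j)}$, i.e.\ $\sigma(i)=j$, and therefore $w_0(L_i)=L_{\sigma(i)}=L_j$.

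The classical inputs (i) and (ii) keep both parts short; the delicate point is to keep a line $L_i$ distinct from its two rays $\pm r_i$ and to track signs. It is precisely the sign in $w(r_i)=-r_j$ — which expresses the fact that $L_i$ and $L_j$ can be $W$-equivalent as lines although $r_i$ and $r_j$ are never $W$-equivalent as rays when $i\ne j$ — that brings $w_0$, through the diagram automorphism $\sigma$, into the statement.
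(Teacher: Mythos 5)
Your proof is correct. One thing to be aware of: the paper does not actually prove this proposition --- it is quoted from the author's earlier work (\cite{josuat}, Proposition~3.3) --- so there is no in-paper argument to compare yours against; what matters is whether your blind proof stands on its own, and it does. The first half is the standard reduction to Steinberg's theorem: because $L\in\mathcal{P}(W)$ is by definition an intersection of reflecting hyperplanes, your chain of inclusions correctly yields $\fix(\stab^*(L))=L$, so $\stab^*(L)$ is a parabolic subgroup with one-dimensional fixed space, hence conjugate to some $W_{(i)}$, and applying $\fix$ gives $L=w(L_i)$. The second half identifies the right mechanism for why $w_0$ enters the statement: the closed fundamental chamber $\overline{C}$ is a strict fundamental domain, so its extreme rays $r_1,\dots,r_n$ lie in pairwise distinct $W$-orbits of rays (your ``key observation''), while $-w_0$ preserves $\overline{C}$ and therefore permutes the $r_k$ by the diagram automorphism $\sigma$; the sign-tracking then correctly forces $w(r_i)=-r_j$ and $\sigma(i)=j$, i.e.\ $w_0(L_i)=L_j$. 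Both classical inputs --- the fundamental-domain property of $\overline{C}$ and the fact that pointwise stabilizers of subspaces are parabolic --- are standard (see \cite{humphreys}), and they are used where they are actually needed; in particular the argument is uniform and case-free, which is in the spirit of how the surrounding paper wants to use the proposition (it feeds directly into Proposition~\ref{standard1} and Lemma~\ref{classlem1}). No gaps.
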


A consequence is the following:

\begin{prop}
Each orbit $O\in\mathcal{M}(W)/W$ contains an element of $\mathcal{M}^I(W)$.
\end{prop}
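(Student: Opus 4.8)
The plan is to start from an arbitrary maximal chain $\hat 0 = \pi_0 \lessdot \pi_1 \lessdot \dots \lessdot \pi_n = \hat 1$ in $\mathcal{P}(W)$ and produce a group element $w$ that moves it into $\mathcal{M}^I(W)$, working downward from the top. The coatom $\pi_{n-1}$ is a line $L$, and by Proposition~\ref{wli} we may write $L = u(L_i)$ for some $u \in W$ and some $i$; replacing the chain by $u^{-1}(\pi_0 \lessdot \dots \lessdot \pi_n)$ we may assume $\pi_{n-1} = L_i$ is already a coatom of $\mathcal{P}^I(W)$. The point of invoking $L_i$ rather than an arbitrary line is that the simple hyperplanes through $L_i$ are exactly the $H_j$ with $j \neq i$, which generate the parabolic $W_{(i)} = \stab^*(L_i)$, so there is room to induct.

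Next I would set up the induction. The interval $[\hat 0, L_i]$ in $\mathcal{P}(W)$ is naturally identified with $\mathcal{P}(W_{(i)})$: the reflecting hyperplanes of $W_{(i)}$ inside the ambient space are precisely the $\fix(t) \cap L_i$ (equivalently the elements of $\mathcal{P}(W)$ lying below $L_i$), and under this identification the simple hyperplanes of $W_{(i)}$ correspond to the $H_j \cap L_i$ for $j \neq i$, hence interval partitions of $W_{(i)}$ correspond to interval partitions of $W$ lying below $L_i$. The truncated chain $\pi_0 \lessdot \dots \lessdot \pi_{n-1} = L_i$ is thus a maximal chain of $\mathcal{P}(W_{(i)})$, which has rank $n-1$; by the induction hypothesis there is $w' \in W_{(i)}$ carrying it into $\mathcal{M}^I(W_{(i)})$, i.e. into a maximal chain of interval partitions of $W_{(i)}$. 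Since $w' \in W_{(i)} = \stab^*(L_i) \subseteq \stab(L_i)$, applying $w'$ fixes $\pi_{n-1} = L_i$ and fixes the newly-interval part below it, so the whole chain $w'(\pi_0 \lessdot \dots \lessdot \pi_n)$ now lies in $\mathcal{M}^I(W)$. The base case $n = 0$ (or $n = 1$) is trivial.

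I expect the main obstacle to be the identification of $[\hat 0, L_i]$ with $\mathcal{P}(W_{(i)})$ together with the compatibility of the two notions of ``interval partition'' under that identification — one must check that the arrangement of reflecting hyperplanes of $W_{(i)}$ acting on $L_i$ (or, more cleanly, on a complement of $L_i$'s orthogonal data) is exactly the trace on $L_i$ of the arrangement of $W$, and that the simple generators of $W_{(i)}$ give the simple hyperplanes of that restricted arrangement. This is standard parabolic-subgroup geometry but needs to be stated carefully so that the induction is legitimate; everything else (the reduction using Proposition~\ref{wli}, the fact that $\stab^*(L_i)$ acts trivially below $L_i$) is immediate.
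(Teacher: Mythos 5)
Your proposal is correct and follows essentially the same route as the paper: reduce the coatom to some $L_i$ via Proposition~\ref{wli}, then induct on the rank using the natural identification of the chains below $L_i$ with $\mathcal{M}(W_{(i)})$, noting that elements of $W_{(i)}=\stab^*(L_i)$ fix $L_i$ and hence do not disturb the top of the chain. The one phrasing to fix is that the elements of $\mathcal{P}(W)$ below $L_i$ are the intersections of the reflecting hyperplanes \emph{containing} $L_i$ (those of reflections in $W_{(i)}$), as your parenthetical correctly indicates, rather than traces $\fix(t)\cap L_i$.
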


\begin{proof}
Let $C\in O$. Using Proposition~\ref{wli}, there exists $w\in W$ such that the coatom $L$ in the chain $w(C)$ is 
an interval partition, i.e. $L$ is one the $L_i$ previously defined. At this point we can make an induction
on the rank. 

Let us sketch how the induction work, using ideas present in \cite{josuat}.
There is a natural bijection between  $\mathcal{M}(W_{(i)})$ and the chains in $\mathcal{M}(W)$ having $L_i$ as 
coatom. This bijection sends $\mathcal{M}^I(W_{(i)})$ to the chains in $\mathcal{M}^I(W)$ having $L_i$ as coatom.
By induction, there is $u\in W_{(i)}$ such that $uw(C)\in\mathcal{M}^I(W)$, whence the result.
\end{proof}

Let us motivate the next definition by some considerations in the ``classical'' case.
Let $\pi_1,\pi_2,\pi_3$ be the noncrossing partitions represented in Figure~\ref{setpart} from left to right.
Here, $\pi$ is represented by drawing an arc between two consecutive elements of each block.
Both $\pi_2$ and $\pi_3$ are covered by $\pi_1$, and more precisely they are obtained from $\pi_1$ by splitting
the block $\{1,2,5,7\}$. But we can make one distinction: $\pi_2$ is obtained by removing one arc from $\pi_1$,
 and its two blocks $\{1,2\}$ and $\{5,7\}$ form an interval partition of the block $\{1,2,5,7\}$ of
$\pi_1$. This is not the case for $\pi_3$.

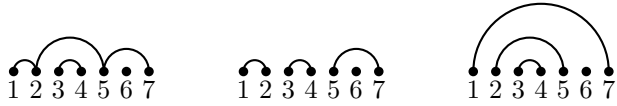
\begin{figure}[h!tp] \psset{unit=3mm}
\begin{pspicture}(1,0)(7,3)
\psdots(1,0)(2,0)(3,0)(4,0)(5,0)(6,0)(7,0)
\rput(1,-0.8){\small 1}
\rput(2,-0.8){\small 2}
\rput(3,-0.8){\small 3}
\rput(4,-0.8){\small 4}
\rput(5,-0.8){\small 5}
\rput(6,-0.8){\small 6}
\rput(7,-0.8){\small 7}
\psarc(1.5,0){0.5}{0}{180}
\psarc(3.5,0){0.5}{0}{180}
\psarc(3.5,0){1.5}{0}{180}
\psarc(6,0){1}{0}{180}
\end{pspicture}
\hspace{1cm}
\begin{pspicture}(1,0)(7,2)
\psdots(1,0)(2,0)(3,0)(4,0)(5,0)(6,0)(7,0)
\rput(1,-0.8){\small 1}
\rput(2,-0.8){\small 2}
\rput(3,-0.8){\small 3}
\rput(4,-0.8){\small 4}
\rput(5,-0.8){\small 5}
\rput(6,-0.8){\small 6}
\rput(7,-0.8){\small 7}
\psarc(1.5,0){0.5}{0}{180}
\psarc(3.5,0){0.5}{0}{180}
\psarc(6,0){1}{0}{180}
\end{pspicture}
\hspace{1cm}
\begin{pspicture}(1,0)(7,2)
\psdots(1,0)(2,0)(3,0)(4,0)(5,0)(6,0)(7,0)
\rput(1,-0.8){\small 1}
\rput(2,-0.8){\small 2}
\rput(3,-0.8){\small 3}
\rput(4,-0.8){\small 4}
\rput(5,-0.8){\small 5}
\rput(6,-0.8){\small 6}
\rput(7,-0.8){\small 7}
\psarc(3.5,0){0.5}{0}{180}
\psarc(4,0){3}{0}{180}
\psarc(3.5,0){1.5}{0}{180}
\end{pspicture}
\caption{Noncrossing partitions. \label{setpart} }
\end{figure}

To generalize this distinction, consider the group $\stab^*(\pi_1) \subset \mathfrak{S}_7$. 
It has an irreducible factor $\mathfrak{S}_4$ acting on the block $\{1,2,5,7\}$.
The simple roots of $\mathfrak{S}_7$ are $e_1-e_2,\dots, e_6-e_7$
where $(e_i)_{1\leq i \leq 7}$ is the standard basis of $\mathbb{R}^7$.
The ones of the irreducible factor $\mathfrak{S}_4$ of $\stab^*(\pi_1)$ are $e_1-e_2$, $e_2-e_5$, $e_5-e_7$.
It can be seen that the simple roots of $\stab^*(\pi_2)$ are included in the ones of $\stab^*(\pi_1)$,
but it is not the case for $\pi_3$. 

Let us turn to the general case.
Let $\Phi$ be a root system of $W$ (in the sense of Coxeter groups, see \cite{humphreys}),
and let $\Phi^+$ be a choice of positive roots. For each $\pi\in\mathcal{P}(W)$, the 
group $\stab^*(\pi)$ is a reflection subgroup of $W$, and its set of roots is $\Phi \cap \pi^{\perp}$.
We will always take $\Phi^+\cap\pi^{\perp}$ as a natural choice of positive roots, and accordingly
$\stab^*(\pi)$ has a natural set of simple roots and simple generators. In this setting, we have the following:

\begin{defi}
 Let $\pi_1,\pi_2 \in \mathcal{P}(W)$, we denote $\pi_2 \sqsubseteq \pi_1$ and say that $\pi_2$ is an 
{\it interval refinement} of $\pi_1$ if the simple roots of $\stab^*(\pi_2)$ are included in the simple 
roots of $\stab^*(\pi_1)$.
\end{defi}

Note that $\pi_2 \sqsubseteq \pi_1$ implies $\pi_1\subseteq \pi_2$, i.e. $\pi_2 \leq \pi_1$ in the lattice $\mathcal{P}(W)$.
Also, interval partitions are exactly the interval refinements of the maximal partition.


Some preliminary definitions are needed before going to noncrossing partitions.

\begin{defi}
Let $T\subset W$ be the set of reflections. A {\it reduced $T$-word} of $w$
is a factorization $w=t_1\dots t_k$ where $t_1,\dots,t_k \in T$ and $k$ is minimal.
Let $u,v\in W$, the {\it absolute order} is defined by the condition that $u <_{abs} v$
if some reduced $T$-word of $u$ is a subword of some reduced $T$-word of $v$.
\end{defi}

\begin{defi}
If $\sigma\in\mathfrak{S}_n$, 
we call $c=s_{\sigma(1)}\dots s_{\sigma(n)}$ a {\it standard Coxeter element} of $W$ with respect to $S$.
Any element conjugated in $W$ to a standard Coxeter element is called a {\it Coxeter element}. 
\end{defi}

This might differ from the terminology used in other references, but we need here some properties
of the standard Coxeter elements that are not true in general. In what follows, we always
assume that $c$ is a standard Coxeter element.

\begin{defi}
A set partition $\pi\in \mathcal{P}(W)$ is {\it noncrossing} with respect to $c$ if $\pi = \fix(w) $ for some $w\in W$
such that $w <_{abs} c$. This $w$ is actually unique and will be denoted $\underline\pi$ (see \cite[Theorem~1]{brady}).
Let $\mathcal{P}^{NC}(W,c) \subset \mathcal{P}(W) $ denote the subset of noncrossing partitions with respect to $c$, 
and $\mathcal{M}^{NC}(W,c) \subset \mathcal{M}(W)$ denote the set of maximal chains of $\mathcal{P}^{NC}(W,c)$.
If $\pi\in \mathcal{P}^{NC}(W,c)$, then $\underline \pi$ is the Coxeter element of a unique parabolic subgroup 
of $W$ that we denote $W_{(\underline\pi)}$ or $W_{(\pi)}$ (although this interferes with the notation $W_{(s)}$ 
for maximal standard parabolic subgroup, there should be no confusion).
\end{defi}

Note in particular that $\fix(\underline{\pi})=\pi$.
We refer to \cite{armstrong} for more on the subject of noncrossing partitions.
In general, $\mathcal{P}^{NC}(W,c)$ is not stable under the action of $W$.
But from the invariance of the absolute order under conjugation, we can see that 
$\mathcal{P}^{NC}(W,c)$ is stable under the action of $c$.

\begin{rema}
Noncrossing partitions are usually defined as a 
subset of $W$, but here it is natural to have the inclusion $\mathcal{P}^{NC}(W,c) \subset \mathcal{P}(W) $.
These two points of view are equivalent under the correspondence $\underline\pi \leftrightarrow \pi$ and we 
will also allow to identify noncrossing partitions with a subset of $W$. For example, if $u,v\in W$ are noncrossing, 
the notion of interval refinement $u\sqsubseteq v$ is well defined, and $u\in W$ is called an interval
partition if it is so as a noncrossing partition.
\end{rema}

\begin{prop}
We have $\mathcal{P}^I(W) \subset \mathcal{P}^{NC}(W,c)$.
Let $\pi_1\in\mathcal{P}^{NC}(W,c)$ and $\pi_2\in\mathcal{P}(W)$ with $\pi_2 \sqsubseteq \pi_1$, then $\pi_2\in\mathcal{P}^{NC}(W,c)$.
\end{prop}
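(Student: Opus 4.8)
The plan is to reduce both assertions to a subword computation inside the Coxeter element. Write $c = s_{\sigma(1)}\dots s_{\sigma(n)}$; this is a reduced expression, and since $\ell_T(c)=n$ it is at the same time a reduced $T$-word of $c$. For the first assertion, an interval partition has the form $\pi=\bigcap_{s\in I}\fix(s)$ for some $I\subseteq S$; deleting from $c$ the letters not in $I$ produces a word $c_I$, and the standard fact that a product of pairwise distinct simple generators is reduced (induction on the length, using $\ell(ws)=\ell(w)+1$ whenever $w$ lies in a standard parabolic not containing $s$) shows that $c_I$ is a reduced expression, hence a standard Coxeter element of the parabolic subgroup $W_I=\langle I\rangle$. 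Its length equals $|I|=\mathrm{codim}\,\pi=\ell_T(c_I)$, so this reduced $S$-word is also a reduced $T$-word, and it is a subword of the reduced $T$-word $s_{\sigma(1)}\dots s_{\sigma(n)}$ of $c$; therefore $c_I<_{abs}c$. Since a Coxeter element has the same fixed space as the group it generates, $\fix(c_I)=\bigcap_{s\in I}\fix(s)=\pi$, so $\pi=\fix(c_I)$ with $c_I<_{abs}c$, i.e. $\pi\in\mathcal{P}^{NC}(W,c)$ and $\underline\pi=c_I$.

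For the second assertion I would use the following structural property of standard Coxeter elements: if $\pi\in\mathcal{P}^{NC}(W,c)$, then $\underline\pi$ is a \emph{standard} Coxeter element of the parabolic subgroup $\stab^*(\pi)=W_{(\pi)}$ with respect to its canonical simple system, namely the simple reflections attached to $\Phi^+\cap\pi^\perp$. Granting this, let $S_1$ denote that simple system of $\stab^*(\pi_1)$; the definition of $\pi_2\sqsubseteq\pi_1$ says precisely that the simple reflections $S_2$ of $\stab^*(\pi_2)$ satisfy $S_2\subseteq S_1$, and since the simple roots of $\stab^*(\pi)$ span $\pi^\perp$ we also have $\pi_2=\bigcap_{s\in S_2}\fix(s)$ and $\stab^*(\pi_2)=\langle S_2\rangle$. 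Now fix a reduced $S_1$-expression of $\underline{\pi_1}$; it has length $|S_1|=\mathrm{codim}\,\pi_1=\ell_T(\underline{\pi_1})$, hence is also a reduced $T$-word of $W$. Deleting from it the letters outside $S_2$ gives, exactly as in the first part, a standard Coxeter element $w_2$ of $\stab^*(\pi_2)$, with $\fix(w_2)=\pi_2$, whose $S_2$-expression is a reduced $T$-word of $W$ and a subword of the chosen expression of $\underline{\pi_1}$. Thus $w_2<_{abs}\underline{\pi_1}$; combined with $\underline{\pi_1}<_{abs}c$ and transitivity of the absolute order, $w_2<_{abs}c$, so $\pi_2=\fix(w_2)\in\mathcal{P}^{NC}(W,c)$ (and $\underline{\pi_2}=w_2$). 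In particular the first assertion is the special case $\pi_1=\hat 1$ (as $\hat 1=\fix(c)\in\mathcal{P}^{NC}(W,c)$ and interval partitions are the interval refinements of $\hat 1$), but I would still prove it separately and first, to avoid circularity.

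The points left to fill in are routine: the reducedness of $c_I$, the equality $\fix(c_I)=\fix(W_I)$ for a Coxeter element, the identity $\ell_T(w)=\mathrm{codim}\,\fix(w)$ for $w\in W$, and the observation that $\pi_2\sqsubseteq\pi_1$ forces $\stab^*(\pi_2)=\langle S_2\rangle$ with $S_2\subseteq S_1$. The one genuinely non-formal ingredient, which I expect to be the main obstacle, is the structural property that $\underline\pi$ is a \emph{standard} Coxeter element of $W_{(\pi)}$ for its canonical simple system: this is exactly where the restriction to standard Coxeter elements is used (the analogous statement fails for an arbitrary Coxeter element), and the argument depends on having either a clean reference for it in the literature on noncrossing partitions (such as \cite{reading} or \cite{armstrong}) or a short direct proof, for instance via the description of standard Coxeter elements by acyclic orientations of the Coxeter diagram, together with the compatibility of that picture with the parabolic subgroups occurring in the noncrossing partition lattice.
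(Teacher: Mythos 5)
Your proposal is correct and follows essentially the same route as the paper: the key non-formal ingredient you isolate (that $\underline{\pi_1}$ is a \emph{standard} Coxeter element of $\stab^*(\pi_1)$ for its canonical simple system) is exactly the paper's Proposition~\ref{standard2}, proved there via Reading's results on cover reflections, and the rest is the same subword-deletion argument giving $w_2 <_{abs} \underline{\pi_1} <_{abs} c$. The only cosmetic difference is that the paper deduces the first assertion from the second (taking $\pi_1 = \hat 1 = \fix(c)$) rather than proving it separately first.
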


\begin{proof}
The maximal partition is noncrossing since $\{0\} = \fix(c)$, so the first point follows the second one.

To prove the second point, we need Proposition~\ref{standard2} from the next section.
Let $r_1,\dots,r_k$ be the reflections associated with the simple roots of $\pi_1^{\perp}$, and we can assume there is $j\leq k$ 
such that $r_1,\dots,r_j$ are the reflections associated with the simple roots of $\pi_2^{\perp}$. Since $\pi_1$ is noncrossing,
it means there is $u\in W$ with $u<_{abs} c$ and $\fix(u)=\pi_1$. 
It is known that $u$ is a Coxeter element of the subgroup $\stab^*(\pi_1)\subset W$. 
But Proposition~\ref{standard2} shows more: it is a standard Coxeter element, so there is
$\sigma \in \mathfrak{S}_k$ such that  $u=r_{\sigma(1)} \dots r_{\sigma(k)}$. Let $v$ be obtained from this factorization by 
keeping only the factors $r_1,\dots,r_j$. Then, we have $v <_{abs} u <_{abs} c$ and $\fix(v)= \pi_2 $, so 
$\pi_2$ is noncrossing.
\end{proof}

\begin{rema}
It is interesting to note that similar results hold for {\it nonnesting partitions} in the sense of Postnikov
(defined only in the crystallographic case).
A set partition $\pi \in \mathcal{P}(W)$ is nonnesting when the simple roots of $\stab^*(\pi)$ form an antichain
in the poset of positive roots. A subset of an antichain being itself an antichain,  if $\pi_2 \sqsubseteq \pi_1 $
and $\pi_1$ is nonnesting, then $\pi_2$ is nonnesting. Any interval partition is nonnesting, since the simple
roots form an antichain. Note also that the intuition from the ``classical'' case is clear: it is impossible
to create a crossing or a nesting by removing arcs.
\end{rema}

%

\section{Chains of noncrossing partitions}
\label{secncc}

\begin{defi}
For any chain $\Pi=(\pi_0,\dots,\pi_n)\in\mathcal{M}^{NC}(W,c)$, let 
${\rm nir}(\Pi)$ be the number of $i$ such that $\pi_i$ is not an interval refinement of $\pi_{i+1}$.
Let 
\[
  M(W,q) = \sum_{ \Pi \in \mathcal{M}^{NC}(W,c) } q^{ {\rm nir} (\Pi) }.
\]
\end{defi}

It is not {\it a priori} obvious that $M(W,q)$ does not depend on the choice of the standard Coxeter element $c$.
This will be proved below.

The coatoms of the lattice $\mathcal{P}^{NC}(W,c)$ are exactly the products $ct$ for $t\in T$.
Since $T$ is stable by conjugation, the set $cT$ of coatoms is stable by conjugation by $c$.
An interesting property of standard Coxeter elements is that this action has good properties, 
(see Propositions~\ref{standard1} and \ref{standard2})
similar to those of a bipartite Coxeter element obtained by Steinberg \cite{steinberg}.

In what follows, an orbit for the action of $c$ will be called a {\it $c$-orbit}. Note that 
the action of $c$ becomes conjugation when we see noncrossing partitions as elements of $W$,
i.e. $\underline{ c(\pi) } = c \underline\pi c^{-1}$ if $\pi\in\mathcal{P}^{NC}(W,c)$.

\begin{prop}   \label{standard1}
Let $h$ be the Coxeter number of $W$ (i.e. the order of $c$ in $W$).
For any $t\in T$, the $c$-orbit of $ct$ satisfies one of the following condition:
\begin{itemize}
 \item It contains $h$ distinct elements, and exactly 2 interval partitions $L_i$ and $L_j$, related by $L_i=w_0(L_j)$.
 \item Or it contains $\frac h2$ distinct elements, and exactly 1 interval partition $L_i$, satisfying $w_0(L_i)=L_i$.
       Moreover, $c^{h/2}$ restricted to $L_i$ is $-1$ (i.e. $c^{h/2} \notin W_{(i)}$).
\end{itemize}
\end{prop}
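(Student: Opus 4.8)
The plan is to analyze the $c$-orbits on the set $cT$ of coatoms by passing to the action of $c$ on the set $T$ of reflections itself, since $t \mapsto ct$ is a $c$-equivariant bijection from $T$ to $cT$ (with $c$ acting by conjugation on both sides). The key input is the classical description, going back to Kostant and Steinberg, of the conjugation action of a Coxeter element on $T$: the reflections fall into $n$ orbits, each of size $h$ when $h$ is even and... more precisely, the orbits are indexed by the simple generators, and a simple reflection $s_i$ lies in an orbit of size $h$ unless the orbit is ``self-paired'' under $w_0$, in which case it has size $h/2$. I would set this up using the standard fact that for a standard (linear) Coxeter element $c = s_1 \cdots s_n$, the element $c^{h/2}$ (when $h$ is even) acts as $-1$ on $V$, or more generally that $-1 \in W$ iff all the $s_i$ are in orbits fixed by $w_0$; and that $w_0$ permutes the simple generators by the opposition involution, which on orbits corresponds to the structure described.

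The concrete steps I would carry out: (1) Translate the statement about coatoms $ct$ and their interval partitions into a statement about reflections: $ct = L_i$ iff $t$ is the reflection $r_i$ whose fixed hyperplane, intersected with the hyperplanes of the other reflections in $c$... actually more carefully, $\fix(ct)$ is a coatom line, and it equals $L_i$ precisely when $t = c^{-1}r$ for the appropriate $r$; I would pin down that the coatoms $L_i$ of $\mathcal{P}^I(W)$ correspond under $t \mapsto ct$ to the reflections $s_i$ themselves (since $c s_i^{-1}\cdots$ — here one uses that deleting $s_i$ from the word $s_1\cdots s_n$ gives a Coxeter element of $W_{(i)}$ whose fixed space is $L_i$). (2) Recall the orbit structure of $c$ acting on $T$: there are exactly $n$ orbits, each containing exactly one simple reflection $s_i$, hence exactly one of the $L_i$; the orbit of $s_i$ has size $h/2$ if $w_0 s_i w_0 = s_i$ as a set-map fixes that orbit with the half-turn identification, and size $h$ otherwise, the two simple reflections $s_i, s_j$ in a size-$h$ orbit being related by $w_0 s_i w_0 = s_j$. (3) Combine with Proposition~\ref{wli}: two distinct $L_i, L_j$ are in the same $c$-orbit iff $w_0(L_i) = L_j$, and this matches the $w_0$-pairing of the corresponding reflection orbits. (4) For the size-$h/2$ case, show $c^{h/2}$ restricted to $L_i$ is $-1$: this is because $c^{h/2}$ acts as $-1$ on the whole of $V$ exactly when $-1\in W$; in the mixed case one argues orbit-by-orbit that $c^{h/2}$ acts as $-1$ on the reflecting hyperplane directions in the self-paired orbit, equivalently $c^{h/2}\notin W_{(i)} = \stab^*(L_i)$.

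I expect the main obstacle to be step (2) together with making step (1) precise: namely establishing cleanly, for a \emph{standard} (not necessarily bipartite) Coxeter element, that the conjugation orbits on $T$ each meet $S$ in exactly one point and that the orbit sizes are $h$ or $h/2$ with the stated $w_0$-pairing. For the bipartite Coxeter element this is Steinberg's classical result, and the paper explicitly invokes the analogy; the point is that it transfers to all standard Coxeter elements, which one can get either from the fact that all standard Coxeter elements are conjugate (by an element permuting the simple system, hence commuting with the relevant structure up to relabeling) or by a direct argument using that $c$ has a regular eigenvector and $h$ equals the order of $c$. The identification of $c^{h/2}|_{L_i} = -1$ in the self-paired case should then follow from tracking the eigenvalues of $c$ on the relevant coordinate subspace, using that the exponents of $W$ and of the parabolic $W_{(i)}$ are related in the standard way; I would phrase it as: $c^{h/2}$ fixes $L_i$ setwise, acts there as an involution, and cannot be the identity there since otherwise the whole orbit would have size dividing $h/2$ in a way incompatible with $\fix(c) = \{0\}$.
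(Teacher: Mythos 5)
Your overall strategy (reduce to the conjugation action of $c$ on $T$, invoke Steinberg's description for the bipartite Coxeter element, and transfer it to an arbitrary standard Coxeter element) has a genuine gap at precisely the step the paper identifies as the hard one. Conjugacy of standard Coxeter elements only transports the \emph{orbit sizes}: if $c = w c_0 w^{-1}$ with $c_0$ bipartite, then $x \mapsto wxw^{-1}$ carries $c_0$-orbits on $T$ bijectively to $c$-orbits, which yields the dichotomy $h$ versus $\frac h2$. But $w$ does not in general normalize $S$, so this transport says nothing about how many of the lines $L_1,\dots,L_n$ lie in each orbit, nor about the $w_0$-pairing between them, nor about whether $c^{h/2}$ acts as $-1$ on $L_i$ --- and that is exactly the content of the proposition beyond Steinberg. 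The paper states explicitly that it was unable to deduce this uniformly from Steinberg's theorem. Your fallback arguments do not close the gap: your step (2) is internally inconsistent (you first assert that each orbit contains exactly one simple reflection, then describe size-$h$ orbits containing two), and the argument that $c^{h/2}|_{L_i}$ cannot be $+1$ ``since otherwise the whole orbit would have size dividing $h/2$'' is circular, since the orbit already has size $\frac h2$ by hypothesis and the sign of $c^{h/2}$ on the line $L_i$ is an independent question (note that $\fix(c^{h/2})$ is nonzero whenever $W$ has an even exponent, so $\fix(c)=\{0\}$ gives no contradiction).

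For comparison, the paper argues uniformly only when $w_0$ is central: there $h$ is even and $c^{h/2}=w_0=-1$ acts trivially on $\mathcal{P}(W)$, so all orbits have size $\frac h2$; Proposition~\ref{wli} bounds the number of interval partitions per orbit by one, and the count $\#T=\frac{nh}2$ forces equality. The remaining cases $A_{n-1}$, $D_n$, $E_6$ are handled case by case in Appendix~\ref{appen}, via explicit unimodal-cycle descriptions of the coatoms and their $c$-orbits in types A and D, and a computer verification for $E_6$. To salvage a proof along your lines you would need an independent, uniform argument that every $c$-orbit on $cT$ meets $\{L_1,\dots,L_n\}$ with multiplicity two or one according to its size, together with the $w_0$-relations; as written, this is assumed rather than proved.
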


The full proof is in Appendix~\ref{appen} but let us give some comments.
A standard Coxeter element $c=s_{\sigma(1)}\dots s_{\sigma(n)}$ is called {\it bipartite} if 
there is $j$ such that $s_{\sigma(1)} , \dots , s_{\sigma(j)}$ are pairwise commuting, 
and $s_{\sigma(j+1)}\dots s_{\sigma(n)}$ too. Steinberg \cite{steinberg} proved
that for a bipartite Coxeter element $c$, the $c$-orbit of a reflection contains either
$h$ elements and 2 simple reflections, or $\frac h2$ elements and 1 simple reflection.
If $h$ is even, another property of the bipartite Coxeter element is $c^{h/2} = w_0 $.
What we have is a variant that holds for any standard Coxeter element.
It is natural to expect that our result can be seen as a consequence of Steinberg's but
we have been unable to realize this in a uniform way.

Since the standard Coxeter element $c$ is conjugated with a bipartite Coxeter element, 
and the bijection $t\mapsto ct$ from $T$ to $cT$ commutes with $c$-conjugation, we see that
the $c$-orbit of $ct$ contains either $h$ or $\frac h2$ elements.
In the case where $w_0$ is central, we can easily complete the proof of Proposition~\ref{standard1}.
It is known that in this case, $h$ is even and $c^{h/2} = w_0 = -1 $, which acts trivially 
on $\mathcal{P}(W)$ (see \cite[Section 3.19]{humphreys}). So every orbit has $\frac h2$ elements.
Proposition~\ref{wli} shows that there is at most one interval partition in each orbit, 
and the equality $\#T = \frac {nh}2 $ shows that there is exactly one interval partition in each orbit.
See Appendix~\ref{appen} for the other cases.

\begin{rema} \label{permutefactor}
Suppose $h$ is even and let $L_i$ be such that $c^{h/2}(L_i)=L_i$. 
As mentioned above, we have $c^{h/2}=w_0$ when $c$ is a bipartite Coxeter element.
In the general case, since $w_0$ and $c^{h/2}$ are both in $\stab(L_i) - \stab^*(L_i)$, we have
$w_0c^{h/2} \in W_{(i)}$. From the properties of $x\mapsto w_0 x w_0$, one can deduce
that the map $x\mapsto c^{h/2} x c^{h/2}$ permutes the irreducible factors of $W_{(i)}$ in the same
way as $x\mapsto w_0 x w_0$. This will be needed in the sequel.
\end{rema}

It is known that parabolic Coxeter elements can be characterized with the absolute order, see \cite[Lemma 1.4.3]{bessis},
so that $ct$ is a Coxeter element of $W_{(ct)}$. The point of the next proposition is that it is actually a standard 
Coxeter element.

\begin{prop}  \label{standard2}
For any $t\in T$, $ct$ is a standard Coxeter element of the parabolic subgroup $W_{(ct)}$ for the natural 
choice of simple generators.
\end{prop}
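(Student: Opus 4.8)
The plan is to analyze the structure of $ct$ directly using the geometry of the fixed space $\fix(ct)$ and the $c$-conjugation action studied in Proposition~\ref{standard1}. Write $c = s_{\sigma(1)}\cdots s_{\sigma(n)}$ and let $\pi = \fix(ct)$, a noncrossing partition of corank one (so $ct$ has absolute length $n-1$ and $W_{(ct)}$ is a parabolic of rank $n-1$). The subgroup $W_{(ct)} = \stab^*(\pi)$ has roots $\Phi\cap\pi^\perp$ with the induced positive system, hence a canonical set of simple roots $\beta_1,\dots,\beta_{n-1}$ and simple reflections $r_1,\dots,r_{n-1}$. We already know (Bessis, \cite[Lemma 1.4.3]{bessis}) that $ct$ is \emph{some} Coxeter element of $W_{(ct)}$, i.e. $ct = r_{\tau(1)}\cdots r_{\tau(n-1)}$ for some $\tau$; the content is that this factorization can be taken in the canonical simple-system order, equivalently that each $r_\ell$ occurs exactly once.

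The first step I would carry out is a reduction to the maximal parabolic case. By Proposition~\ref{wli}, after conjugating by a suitable power of $c$ — using that $cT\to cT$ commutes with $c$-conjugation and that noncrossing partitions with respect to $c$ are $c$-stable — one may move the coatom $ct$ so that the unique line it does \emph{not} refine is one of the $L_i$; alternatively, argue by downward induction on corank, passing from $ct$ to a coatom of $W_{(ct)}$. Concretely, I would show: if $L_i$ is a coatom of $\mathcal P^{NC}(W,c)$ (some $ct_0 = L_i$), then $c t_0$ restricted to $W_{(i)} = \stab^*(L_i) = W_{(ct_0)}$ is the standard Coxeter element $s_{\sigma(1)}\cdots\widehat{s_{\sigma(i)}}\cdots s_{\sigma(n)}$ obtained by deleting $s_i$ from the word for $c$ — this is the classical fact that deleting a letter from a reduced Coxeter word gives the induced Coxeter element of the maximal parabolic, and here it is literally a standard Coxeter element of $W_{(i)}$ because the simple system of $W_{(i)}$ is $\{s_j : j\neq i\}$. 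Then one needs the analogue of Proposition~\ref{wli}/Proposition~\ref{standard1} \emph{inside} $W_{(i)}$ to propagate down to arbitrary coatoms, and to close the induction one needs to know that the simple generators of $W_{(ct)}$ are compatible with the simple generators of an ambient maximal parabolic — this is exactly what the interval-refinement machinery of Section~2 provides, since the coatoms $L_j$ of $\mathcal P^I(W)$ generate the interval partitions and the simple roots of $\stab^*$ are nested under $\sqsubseteq$.

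The second, more delicate step is to handle the base of the induction when $w_0$ is \emph{not} central, where Proposition~\ref{standard1} has the $h/2$-orbit case with $c^{h/2}|_{L_i} = -1$. Here Remark~\ref{permutefactor} is the tool: $x\mapsto c^{h/2}xc^{h/2}$ permutes the irreducible factors of $W_{(i)}$ the same way $x\mapsto w_0xw_0$ does, so a factor fixed by this involution is of a type whose longest element is central (so $-1$ acts, consistent with $c^{h/2}|_{L_i}=-1$ on that factor's span), while factors swapped in pairs contribute a ``doubled'' standard Coxeter element — and on each such piece the standard-Coxeter-element property is inherited from the ambient structure. Assembling the pieces across irreducible factors of $W_{(ct)}$ (a Coxeter element of a reducible group is standard iff its restriction to each factor is standard), one gets the claim.

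The main obstacle I expect is precisely the non-uniformity flagged in the text: there is no slick conjugacy argument reducing a standard Coxeter element to a bipartite one \emph{while preserving the simple-system order} on every parabolic simultaneously. So the real work is the descent in the first step — verifying that ``delete a letter, restrict to the maximal parabolic'' keeps one inside the class of standard Coxeter elements at every stage, and that the canonical simple roots of $W_{(ct)}$ (defined via $\Phi^+\cap\pi^\perp$) actually coincide with the simple roots one reads off from the deletion word. I would isolate this as the crux and prove it by a careful induction on $n$, using that for a standard Coxeter element the reduced $T$-word obtained from $s_{\sigma(1)}\cdots s_{\sigma(n)}$ by deleting a prefix/suffix is again a reduced word in simple reflections of the relevant parabolic, together with Proposition~\ref{standard1} to control which line drops out at each step.
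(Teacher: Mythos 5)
There is a genuine gap, and it sits exactly where you flag the ``crux'' without closing it. Your plan is to reduce to the interval--partition coatoms $L_i$ (where the deletion argument does work: removing $s_i$ from the word for $c$ gives a standard Coxeter element of $W_{(i)}$, whose canonical simple system really is $\{s_j : j\neq i\}$) and then to ``propagate down to arbitrary coatoms'' along the $c$-orbit using Propositions~\ref{wli} and \ref{standard1}. But conjugation by $c^k$ does not transport standardness: the canonical simple generators of $W_{(ct)}=\stab^*(\pi)$ are defined by the induced positive system $\Phi^+\cap\pi^\perp$, and $c^k$ does not map positive roots to positive roots, so the simple system of $c^kW_{(i)}c^{-k}$ is in general \emph{not} the $c^k$-image of $\{s_j: j\neq i\}$. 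Knowing that $c^k\underline{L_i}c^{-k}$ is a product of the conjugated generators therefore says nothing about whether it is a product of the canonical ones. Your second step (the $w_0$-noncentral case via Remark~\ref{permutefactor}) concerns the orbit structure of coatoms, not this compatibility of simple systems, so it does not repair the descent. The final sentence of your proposal, promising ``a careful induction on $n$,'' is precisely the missing content: no mechanism is given that controls the canonical simple roots of a non-interval parabolic.

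This difficulty is not incidental: the paper's own proof does not argue by conjugation or deletion at all. It reduces (by induction on rank) to showing that $\underline\pi$ is a standard Coxeter element of $\stab^*(\pi)$ for every noncrossing $\pi$, and then invokes Reading's results on sortable elements \cite[Theorem~6.1 and Lemma~1.3]{reading2}: the cover reflections of $\underline\pi$ are canonically the simple generators of a parabolic subgroup, and $\underline\pi$ is the product of its cover reflections. That external input is what replaces the step you leave open; indeed, before the referee supplied this argument the statement had only been verified case by case (with computer assistance for the exceptional types), which is strong evidence that the elementary conjugation/deletion route you sketch cannot be completed as stated.
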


\begin{proof}
The elements $ct$ ($t\in T$) are the coatoms of $\mathcal{P}^{NC}(W)$.
By an immediate induction, the proposition implies (and therefore is equivalent) to the 
stronger fact that $\underline\pi$ is a standard Coxeter element of $\stab^*(\pi)$
for each $\pi\in\mathcal{P}^{NC}(W)$.
The proof of this has been provided by an anonymous referee, and relies on results by Reading~\cite{reading2}.

More specifically, the result follows from \cite[Theorem~6.1]{reading2}. A consequence of this theorem is that a 
noncrossing partition $\underline \pi$ is a product of its so-called {\it cover reflections}. 
Besides, \cite[Lemma~1.3]{reading2} states that these cover reflections are the simple generators of a parabolic 
subgroups.
\end{proof}

We are now ready to prove how $M(W,q)$ can be computed inductively, and in particular that it does not depend
on the choice of a standard Coxeter element.

\begin{prop}  \label{proprecmwq}
If $W$ is irreducible, we have:
\begin{equation} \label{recmwq}
   M(W,q) = \frac{2+q(h-2)}{2} \sum_{s\in S} M( W_{(s)} , q ).
\end{equation}
\end{prop}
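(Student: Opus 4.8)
The strategy is to set up a bijective recursion on $\mathcal{M}^{NC}(W,c)$ by classifying chains according to their coatom, following the approach of Reading~\cite{reading}. A maximal chain $\Pi = (\pi_0,\dots,\pi_n)$ with $\pi_n = \hat 1$ and coatom $\pi_{n-1}$; the coatoms of $\mathcal{P}^{NC}(W,c)$ are exactly the elements $ct$ for $t \in T$. There is a natural bijection between noncrossing chains with fixed coatom $\pi_{n-1} = ct$ and maximal chains of $\mathcal{P}^{NC}(W_{(ct)}, ct)$, because $ct$ is the Coxeter element of the parabolic subgroup $W_{(ct)}$ and the interval below $ct$ in $\mathcal{P}^{NC}(W,c)$ is isomorphic to the full noncrossing partition lattice of that parabolic. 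Moreover by Proposition~\ref{standard2}, $ct$ is in fact a \emph{standard} Coxeter element of $W_{(ct)}$, so the statistic $\mathrm{nir}$ restricted to the part of the chain lying inside $W_{(ct)}$ agrees with the intrinsic $\mathrm{nir}$ computed in $W_{(ct)}$ with respect to its standard Coxeter element. The only cover relation not accounted for is the top one $\pi_{n-1} \lessdot \pi_n$, which contributes $1$ to $\mathrm{nir}(\Pi)$ precisely when $\pi_{n-1} = ct$ is not an interval refinement of $\hat 1$, i.e.\ when $ct \notin \mathcal{P}^I(W)$, equivalently when $ct$ is not one of the lines $L_i$.

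Hence one gets
\[
  M(W,q) = \sum_{t \in T} q^{\varepsilon(t)}\, M(W_{(ct)}, q),
\]
where $\varepsilon(t) = 0$ if $ct = L_i$ for some $i$ and $\varepsilon(t) = 1$ otherwise, and where $M(W_{(ct)},q)$ is well-defined by induction on the rank (and in particular independent of the standard Coxeter element chosen for $W_{(ct)}$). Now I would group the sum over $T$ into $c$-orbits of coatoms, using Proposition~\ref{standard1}. An orbit of size $h$ contains exactly two interval partitions $L_i, L_j$ with $L_i = w_0(L_j)$, so it contributes $(2 + q(h-2))\,M(W_{(s)},q)$ where $s$ is the corresponding simple generator; the point is that all $h$ elements $ct$ in one orbit are conjugate (via powers of $c$), hence the parabolics $W_{(ct)}$ are all conjugate to a single maximal standard parabolic $W_{(s)}$, so the $M(W_{(ct)},q)$ are all equal to $M(W_{(s)},q)$. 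An orbit of size $h/2$ contains exactly one interval partition $L_i$, and Remark~\ref{permutefactor} / Proposition~\ref{standard1} guarantee that the parabolic type is again constant along the orbit, giving a contribution $(1 + q(\tfrac h2 - 1))\,M(W_{(s)},q) = \tfrac12(2 + q(h-2))\,M(W_{(s)},q)$. In both cases the contribution of an orbit meeting $L_i$ is $\tfrac12(2+q(h-2))M(W_{(s_i)},q)$; summing over all orbits (equivalently over all $i$, since each $L_i$ lies in exactly one orbit by Proposition~\ref{wli}) yields exactly \eqref{recmwq}.

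The main obstacle is verifying that $M(W_{(ct)}, q)$ depends only on the conjugacy class (parabolic type) of $W_{(ct)}$ and not on the embedding, and — more delicately — that the $\mathrm{nir}$ statistic is compatible with the parabolic bijection, i.e.\ that a cover relation $\pi_{i} \lessdot \pi_{i+1}$ inside the chain is an interval refinement in the ambient $W$ if and only if it is one computed intrinsically in $W_{(ct)}$ with its standard Coxeter structure. This is where Proposition~\ref{standard2} is essential: interval refinement is defined via inclusion of \emph{simple} roots of pointwise stabilizers, and one needs the simple system of $\stab^*(\pi)$ computed in $W$ to restrict correctly to the simple system relevant inside the parabolic $W_{(ct)}$. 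Once this compatibility is in place, together with the orbit-counting input from Proposition~\ref{standard1} (the number of interval partitions in each $c$-orbit and their relation via $w_0$), the identity \eqref{recmwq} follows by the grouping described above; independence of $M(W,q)$ from the choice of $c$ is then automatic by induction, since the right-hand side of \eqref{recmwq} manifestly does not refer to $c$.
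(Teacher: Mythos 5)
Your proposal is correct and follows essentially the same route as the paper's proof: classify chains by their coatom $ct$, use Proposition~\ref{standard2} to identify the fibre with $\mathcal{M}^{NC}(W_{(ct)},ct)$ and make $M(W_{(ct)},q)$ well-defined, then group the sum over $T$ into $c$-orbits via Proposition~\ref{standard1} and symmetrize the size-$h$ orbit contribution over its two interval partitions. The orbit bookkeeping, including the factor $\tfrac12(2+q(h-2))$ per interval partition, matches the paper exactly.
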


\begin{proof}
For each $\Pi = (\pi_0,\dots,\pi_n) \in \mathcal{M}^{NC}(W,c)$, let $\Pi' = (\pi_0,\dots,\pi_{n-1}) $.
The coatom of $\Pi$ is $\pi_{n-1}=ct$ for some $t\in T$, and the set of such $\Pi$ with $ct$ as coatom 
is in bijection with $\mathcal{M}^{NC}(W_{(ct)},ct)$ via the map $\Pi \mapsto \Pi'$. Moreover,
${\rm nir}(\Pi) = {\rm nir}(\Pi') $ if $ct \sqsubseteq c $ (i.e. $ct\in \mathcal{P}^I(W)$) 
and ${\rm nir}(\Pi) = {\rm nir}(\Pi') + 1 $ otherwise.
So, distinguishing the chains in $\mathcal{M}^{NC}(W,c)$ according to their coatoms gives:
\begin{equation} \label{ind1}
  M(W,q) = \sum_{t\in T} q^{ \chi [ \; ct \notin \mathcal{P}^I(W) \; ]  }  M( W_{(ct)} , q ).
\end{equation}
Note that to write this equation, we need to use Proposition~\ref{standard2}. 
While it should be clear from the definition that the generating function of the chains
$ (\pi_0,\dots,\pi_{n-1}) \in \mathcal{M}^{NC}(W_{(ct)},ct) $ with respect to the statistic ${\rm nir}$
is $M( W_{(ct)} , q )$, this quantity was only defined with respect to a standard Coxeter element.
Since $ct$ is indeed a standard Coxeter element of $W_{(ct)}$, we get the term $M( W_{(ct)} , q )$
which we assume we already know by induction.

Let $O\subset T$ be an orbit under conjugation by $c$.
 So
if $t_1,t_2\in O$, $W_{(ct_1)}$ and $W_{(ct_2)}$ are conjugated in $W$, so they are isomorphic
and $M( W_{(ct_1)} , q )=M( W_{(ct_2)} , q )$.
If $cO = \{ co \; : \; o\in O \}$ contains $h/2$ elements and 1 interval partition $L_i$, we get
\begin{equation} \label{ind2}
  \sum_{t\in O} q^{ \chi [ \; ct \notin \mathcal{P}^I(W) \; ]  }  M( W_{(ct)} , q ) = (1+q(\tfrac h2 - 1) ) M(W_{(i)},q).
\end{equation}
If it contains $h$ elements and 2 interval partitions $L_i$ and $L_j$, then 
\[
  \sum_{t\in O} q^{ \chi [ \; ct \notin \mathcal{P}^I(W) \; ]  }  M( W_{(ct)} , q ) = (2+q(h - 2) ) M(W_{(i)},q),
\]
and since the previous equation is true with $i$ replaced with $j$, we also have
\begin{equation} \label{ind3}
  \sum_{t\in O} q^{ \chi [ \; ct \notin \mathcal{P}^I(W) \; ]  }  M( W_{(ct)} , q ) = \frac{2+q(h - 2)}2 ( M(W_{(i)},q) + M(W_{(j)},q) ).
\end{equation} 
Now, we can split the sum in the righ-hand side of \eqref{ind1} to group together the $t\in T$ that are in the same orbit, 
and using Equations~\eqref{ind2} and \eqref{ind3},  we get the desired formula for $M(W,q)$.
\end{proof}

Besides, in the reducible case it is straightforward to show that
\begin{equation} \label{recmwq2}
  M(W_1\times W_2,q) = \binom{m+n}{m} M(W_1,q)\times M(W_2,q)
\end{equation}
if the respective ranks of $W_1$ and $W_2$ are $m$ and $n$.

Equation~\eqref{recmwq} and \eqref{recmwq2} can be used to compute $M(W,q)$ by induction for any $W$, with the initial
value $M(A_1,q)=1$.

This recursion permits to make a link with the Fuss-Catalan numbers ${\rm Cat}^{(m)}(W)$ (see \cite[Chapter 5]{armstrong}).
These numbers can be defined in terms of the degrees of the group $d_1,\dots,d_n$ and the Coxeter number $h=d_n$ by 
\[
  {\rm Cat}^{(m)}(W) = \frac{1}{|W|} \prod_{i=1}^n (hm + d_i).
\]
Chapoton \cite{chapoton} showed that ${\rm Cat}^{(m)}(W)$ is the number of multichains $\pi_1\leq\dots \leq \pi_m$ in $\mathcal{P}^{NC}(W,c)$,
i.e. ${\rm Cat}^{(m)}(W)=Z(W,m+1)$ where $Z(W,m)$ is the zeta polynomial of $\mathcal{P}^{NC}(W,c)$.
Fomin and Reading \cite{fomin} introduced the so-called generalized cluster complex $\Delta^m(\Phi)$, and showed that its number 
of maximal simplices is ${\rm Cat}^{(m)}(W)$ (where $\Phi$ is the root system of $W$).
Using this generalized cluster complex, they obtain in \cite[Proposition 8.3]{fomin} that
\begin{equation} \label{recfomin}
  {\rm Cat}^{(m)}(W) = \frac{(m-1)h+2}{2n} \sum_{s\in S} {\rm Cat}^{(m)}(W_{(s)})
\end{equation}
in the irreducible case. Besides, there holds
\begin{equation} \label{recfomin2}
  {\rm Cat}^{(m)}(W_1\times W_2) = {\rm Cat}^{(m)}(W_1) \times {\rm Cat}^{(m)}(W_2)
\end{equation}
in the reducible case.
Comparing the recursions \eqref{recmwq}, \eqref{recmwq2} and \eqref{recfomin}, \eqref{recfomin2} shows that
\[
  M(W,q) = n! (1-q)^n Z\big(W,\tfrac{1}{1-q}\big),
\]
where we use the zeta polynomial rather than writing ``${\rm Cat}^{(\frac{q}{1-q})}(W)$''
because it is generally assumed that $m\in\mathbb{N}$ when we write ${\rm Cat}^{(m)}(W)$.
Then, the formula for ${\rm Cat}^{(m)}(W)$ in terms of the degrees proves the proposition below
(note that the particular case $q=1$ is the result by Chapoton mentioned in the introduction):

\begin{prop}
\[
  M(W,q) = \frac{n!}{|W|}  \prod_{i=1}^n \big( d_i + q(h-d_i) \big).
\]
\end{prop}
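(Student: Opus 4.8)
The plan is to deduce the closed formula from the recursion in Proposition~\ref{proprecmwq} together with the analogous recursion \eqref{recfomin} for the Fuss-Catalan numbers, exactly as the paragraph preceding the statement indicates. First I would introduce the auxiliary polynomial $N(W,q) := n!\,(1-q)^n Z\big(W,\tfrac{1}{1-q}\big)$, where $Z(W,m)$ is the zeta polynomial of $\mathcal{P}^{NC}(W,c)$; note that since $Z(W,m)$ is a polynomial in $m$ of degree $n$, the expression $(1-q)^n Z\big(W,\tfrac{1}{1-q}\big)$ is indeed a polynomial in $q$, so $N(W,q)$ is well-defined. Using Chapoton's identity $Z(W,m+1) = {\rm Cat}^{(m)}(W)$, the substitution $m = \tfrac{q}{1-q}$ (equivalently $m+1 = \tfrac{1}{1-q}$) formally converts \eqref{recfomin} and \eqref{recfomin2} into recursions for $N$. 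The key arithmetic check is that $(m-1)h+2$ becomes $\big(\tfrac{q}{1-q}-1\big)h+2 = \tfrac{2-q(h-2)}{\vphantom{1}}\cdot\tfrac{1}{\vphantom{1}} $... more carefully, $\big(\tfrac{q}{1-q}-1\big)h + 2 = \tfrac{h(2q-1)+2(1-q)}{1-q} = \tfrac{2 + q(h-2)}{1-q}\cdot$ (after simplifying $2hq - h + 2 - 2q = q(2h-2) - (h-2) $, hmm — I should just verify: $h(2q-1) + 2(1-q) = 2hq - h + 2 - 2q$; and $2+q(h-2)$ times... this needs the factor $(1-q)$ bookkeeping from the $(1-q)^n$ versus $(1-q)^{n-1}$ discrepancy between ranks $n$ and $n-1$, which supplies exactly the missing $(1-q)$). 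Matching powers of $(1-q)$ and the factorial prefactors $n!$ versus $(n-1)!$ turns \eqref{recfomin} into precisely \eqref{recmwq}, and \eqref{recfomin2} into \eqref{recmwq2}.

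Next I would observe that $M(W,q)$ and $N(W,q)$ satisfy the same recursion and the same base case $M(A_1,q)=1=N(A_1,q)$ (since $Z(A_1,m)=m-1$, so $N(A_1,q) = 1!\,(1-q)\cdot\tfrac{1}{1-q}=1$), and that every finite Coxeter group is built from $A_1$ by the two operations ``take a maximal parabolic'' (used in \eqref{recmwq}) and ``take a product'' (used in \eqref{recmwq2}) — more precisely, the recursion \eqref{recmwq}, \eqref{recmwq2} determines $M(W,q)$ from the values on groups of strictly smaller rank, so an induction on rank gives $M(W,q)=N(W,q)$ for all $W$. This in particular shows, as promised in the text, that $M(W,q)$ does not depend on the choice of standard Coxeter element, since the right-hand side $N(W,q)$ manifestly does not.

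Finally I would plug in the product formula ${\rm Cat}^{(m)}(W) = \tfrac{1}{|W|}\prod_{i=1}^n (hm+d_i)$. Setting $m = \tfrac{q}{1-q}$ gives $hm + d_i = \tfrac{hq + d_i(1-q)}{1-q} = \tfrac{d_i + q(h-d_i)}{1-q}$, so
\[
  M(W,q) = N(W,q) = n!\,(1-q)^n \cdot \frac{1}{|W|}\prod_{i=1}^n \frac{d_i + q(h-d_i)}{1-q} = \frac{n!}{|W|}\prod_{i=1}^n \big(d_i + q(h-d_i)\big),
\]
which is the claimed identity. The case $q=1$ recovers $M(W,1) = \tfrac{n!}{|W|}h^n$, Chapoton's reinterpretation of Deligne's formula.

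The main obstacle is purely bookkeeping: one must carry the $(1-q)$ powers and the factorials correctly through the substitution $m \mapsto \tfrac{q}{1-q}$ so that the two recursions genuinely coincide, and one must be slightly careful that $N(W,q)$ is a polynomial (so that the identity of polynomials, not just of rational functions away from $q=1$, is legitimate). Neither point is deep, but the algebraic identity $\big(\tfrac{q}{1-q}-1\big)h+2 = \tfrac{(1-q)^{-1}}{1}\big(2 + q(h-2)\big) \cdot (1-q) $ — i.e. that the Fomin–Reading coefficient, after clearing the denominator introduced by the rank shift, is exactly $\tfrac{2+q(h-2)}{2}$ — is the one computation I would present explicitly to make the comparison airtight.
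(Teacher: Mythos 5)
Your overall strategy is exactly the paper's: introduce $N(W,q)=n!\,(1-q)^nZ\big(W,\tfrac1{1-q}\big)$, show it satisfies the same recursion and initial condition as $M(W,q)$ by comparing \eqref{recmwq}, \eqref{recmwq2} with \eqref{recfomin}, \eqref{recfomin2}, and then substitute the product formula for ${\rm Cat}^{(m)}(W)$. The final substitution step and the induction scheme are fine. But the one computation you single out as the crux — and visibly struggle with mid-proof — is in fact \emph{false} as you state it. With $m=\tfrac{q}{1-q}$ one has
\[
(1-q)\big((m-1)h+2\big)=(2q-1)h+2(1-q)=2-2q+h(2q-1),
\]
which is \emph{not} $2+q(h-2)=2-2q+qh$; the two differ by $h(q-1)$. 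So the identity you promise to ``present explicitly to make the comparison airtight'' does not hold, and the recursions \eqref{recmwq} and \eqref{recfomin} do not match under your substitution. Your hedging (``hmm --- I should just verify'') is warranted: the verification fails.

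The resolution is that \eqref{recfomin} as printed is inconsistent with the definition ${\rm Cat}^{(m)}(W)=\tfrac1{|W|}\prod(hm+d_i)$ given in the same paragraph: testing $W=A_2$, $m=1$ gives ${\rm Cat}^{(1)}(A_2)=5$ on the left but $\tfrac{(m-1)h+2}{2n}\sum_s{\rm Cat}^{(1)}(A_1)=\tfrac24\cdot 4=2$ on the right. The correct coefficient is $\tfrac{mh+2}{2n}$ (one checks $\tfrac{3\cdot1+2}{4}\cdot 4=5$, and similarly for $B_2$, $A_3$). With that coefficient the bookkeeping you describe does close: the rank shift supplies one factor of $(1-q)$, and
\[
(1-q)(mh+2)=(1-q)\Big(\tfrac{qh}{1-q}+2\Big)=qh+2-2q=2+q(h-2),
\]
so $N$ satisfies \eqref{recmwq}, and the rest of your argument (base case, induction on rank, independence of $c$, and the degree formula) goes through verbatim. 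So the gap is not in your plan but in the fact that you assert a false arithmetic identity at the decisive step instead of detecting and repairing the off-by-one in $m$; as written, your proof does not compile into a correct argument until that coefficient is fixed.
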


It is also possible to obtain this formula by solving the recursion \eqref{recmwq} case by case.
We will not give the details, since lengthy calculations are needed for the differential equations 
arising in the infinite families case.
Let us just present the case of the group $A_n$, where we get that the 
series $A(z)=\sum_{n\geq 0} M(A_n,q) \frac{z^n}{n!}$ satisfies the differential equation
\[
  A' = A^2 + \tfrac{qz}{2} (A^2)'.
\]
After multiplying the equation by $A^{q-2}$, it can be rewritten
\[
  \bigg(\frac{A^{q-1}}{q-1}\bigg)' = (zA^q)'.
\]
After checking the constant term, we arrive at the functional equation $A^{q-1} = 1 + (q-1)zA^q$.
It would be possible to extract the coefficients of $A$ with the Lagrange inversion formula.
Another method is to use results about Fuss-Catalan numbers in type A.
It is known that ${\rm Cat}^{(m-1)}(A_{n-1}) = \frac{1}{mn+1}\binom{mn+1}{n}$, which 
is the number of complete $m$-ary trees with $n$ internal vertices, so that 
$F = 1+ \sum_{n\geq 1} {\rm Cat}^{(m-1)}(A_{n-1}) z^n $ satisfies $F=1+zF^m$.
The equation for $A$ can be rewritten
\[
  A^{1-q} = 1 + z(1-q)A
\]
So, comparing the functional equations shows $F(z) = A(\frac{z}{1-q} )^{1-q} $ if $m=\frac{1}{1-q}$.
This is also $F(z)=1+zA(\frac{z}{1-q})$.
Taking the coefficient of $z^{n+1}$, we obtain: 
\[
  \frac{ 1 }{ \frac{n+1}{1-q} +1 } \binom{ \frac{n+1}{1-q}+1 }{ n+1 } =   \frac{1}{(1-q)^n n!} M(A_n,q),
\]
hence
\[
  M(A_n,q) = \frac{n! (1-q)^n}{ \frac{n+1}{1-q} +1 } \binom{ \frac{n+1}{1-q}+1 }{ n+1 }  = \prod_{i=1}^{n-1} ( i+1  + q(n-i)  ).
\]
\section{Generating functions of equivalence classes and hook formulas.}
\label{secclassgen}

%
%

\begin{defi}
For any $\Pi \in \mathcal{M}^{NC}(W,c)$, let $[\Pi]$ denote its equivalence class for the $W$-action:
\[ 
  [\Pi] = \{ w(\Pi) \; : \; w\in W \} \cap \mathcal{M}^{NC}(W,c).
\]
We also define the class generating function:
\[
  M([\Pi],q) = \sum_{\Omega\in [\Pi]} q^{{\rm nir}(\Omega)}.
\] 
\end{defi}

These classes partition the set $\mathcal{M}^{NC}(W,c)$, so that we have
\begin{equation} \label{classeq}
  M(W,q) = \sum_{[\Pi]}  M([\Pi],q)
\end{equation}
where we sum over all distinct equivalence classes.

We need some definitions before giving the formula for $M([\Pi],q)$.

Let $\tau \lessdot \pi$ be a cover relation in $\mathcal{P}^{NC}(W,c)$.
The group $W_{(\pi)}$ can be decomposed into irreducible factors (that can be thought of as ``blocks'' of the 
set partition $\pi$). There is only one of these factors where $\underline \tau$ and $ \underline \pi$ differ, as can be seen
from the factorization of the poset $\mathcal{P}(W_{(\pi)})$ induced by the factorization of $W_{(\pi)}$.

\begin{defi}
With $\tau$ and $\pi$ as above, let $h(\tau,\pi)$ be the Coxeter number of the irreducible factor of $W_{(\pi)} $
where $\underline\tau$ and $\underline\pi$ differ. 
\end{defi}

\begin{defi}
Let $g(\tau,\pi)$ be minimal $g>0$ such that $\underline\pi^g \, \underline \tau \, \underline \pi ^{-g} = \underline\tau $
and the map $x\to \underline\pi^g x \underline\pi^{-g}$ stabilizes each irreducible factor of $W_{(\tau)}$.
\end{defi}


Note that by examining the irreducible factors of $W_{(\pi)}$, we can see that we have
$\underline\pi^{h(\tau,\pi)} \, \underline \tau \, \underline \pi ^{-h(\tau,\pi)} = \underline\tau $.
From $\underline\pi^g \, \underline \tau \, \underline \pi ^{-g} = \underline\tau $ and Proposition~\ref{standard2}, 
we have either $g(\tau,\pi) = h(\tau,\pi)$ or $g(\tau,\pi) = \frac12 h(\tau,\pi)$.
Note also that when $h(\tau,\pi)$ is even, as noted in Remark~\ref{permutefactor}, we known that the map 
$x\to \underline\pi^{\frac 12 h(\tau,\pi)} x \underline\pi^{-\frac 12 h(\tau,\pi)}$ permutes the irreducible factors
of $W_{(\tau)}$.

\begin{prop} \label{classgen}
Let $ \Pi = (\pi_0,\dots,\pi_n) \in\mathcal{M}^{NC}(W,c) $, let $h_i=h(\pi_{i-1},\pi_i)$ and $g_i=g(\pi_{i-1},\pi_i)$
for $2\leq i \leq n$. Then we have:
\[
  M( [\Pi] , q ) = \prod_{i=2}^{n} \Big( \frac{2g_i}{h_i} + q \Big(g_i - \frac{2g_i}{h_i}  \Big)  \Big).
\]
\end{prop}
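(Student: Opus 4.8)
The plan is to prove Proposition~\ref{classgen} by induction on the rank $n$, mimicking the structure of the proof of Proposition~\ref{proprecmwq} but now tracking the equivalence class. First I would fix $\Pi = (\pi_0,\dots,\pi_n)$ and look at its coatom $\pi_{n-1} = ct$. The key observation is that the truncated chain $\Pi' = (\pi_0,\dots,\pi_{n-1})$ is a maximal chain in $\mathcal{P}^{NC}(W_{(ct)}, ct)$, and by Proposition~\ref{standard2} the element $ct$ is a genuine standard Coxeter element of $W_{(ct)}$, so $M([\Pi'], q)$ is defined and computable by induction. The factors $h_i, g_i$ for $2 \le i \le n-1$ are intrinsic to the lower interval and are unchanged when passing from $W$ to $W_{(ct)}$, so the induction hypothesis gives $M([\Pi'],q) = \prod_{i=2}^{n-1}\big(\frac{2g_i}{h_i} + q(g_i - \frac{2g_i}{h_i})\big)$. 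It then remains to show that the ratio $M([\Pi],q)/M([\Pi'],q)$ equals the single factor $\frac{2g_n}{h_n} + q(g_n - \frac{2g_n}{h_n})$.

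The heart of the argument is therefore a bijective/orbit-counting analysis at the top level. I would describe $[\Pi]$ explicitly: an element $\Omega \in [\Pi]$ is obtained by applying some $w \in W$, and $\Omega$ lies in $\mathcal{M}^{NC}(W,c)$ exactly when $w(\pi_i)$ is noncrossing for all $i$; since the whole chain is determined by its coatom together with the lower part, and since the action of $c$ is the only symmetry preserving noncrossingness (recall $\mathcal{P}^{NC}(W,c)$ is stable under $c$ but not under all of $W$), the coatoms of chains in $[\Pi]$ that are again noncrossing are precisely those in the $c$-orbit of $\pi_{n-1} = ct$ — this is where Proposition~\ref{standard1} enters, telling us this $c$-orbit has either $h_n$ or $\frac12 h_n$ elements (here I would need to match the ``local'' Coxeter number $h(\pi_{n-1},\pi_n)$ of the relevant irreducible factor of $W_{(\pi_n)} = W$ with the $h$ of Proposition~\ref{standard1}, i.e.\ handle reducible $W$ by the product formula \eqref{recmwq2} and reduce to the irreducible factor acting nontrivially). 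For each such coatom $c^k(ct)$, the chains in $[\Pi]$ with that coatom are governed by the equivalence class of the lower chain inside the conjugated parabolic, and the statistic $\mathrm{nir}$ picks up $+1$ on the top step exactly when $c^k(ct)$ is not an interval partition. Summing $q^{\mathrm{nir}}$ over the $c$-orbit of the coatom, exactly as in equations \eqref{ind2} and \eqref{ind3}, produces the factor $1 + q(\frac{h_n}{2}-1)$ or $\frac{2+q(h_n-2)}{2}$ times $M([\Pi'],q)$ — but one must be careful that not every element of the $c$-orbit of the coatom extends to a chain in $[\Pi]$; rather the multiplicity with which each appears is exactly $g_n$, the period after which conjugation by $\underline{\pi_n} = c$ returns $\underline{\pi_{n-1}}$ to itself and fixes the irreducible factors of $W_{(\pi_{n-1})}$. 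Writing $h_n/g_n \in \{1,2\}$, the two cases of Proposition~\ref{standard1} collapse into the single expression $\frac{2g_n}{h_n} + q(g_n - \frac{2g_n}{h_n})$: when the orbit is ``long'' ($g_n = h_n/2$ say, giving two interval partitions in the orbit) one gets $1 + q(g_n-1) = \frac{2g_n}{h_n} + q(g_n - \frac{2g_n}{h_n})$ with $h_n/g_n = 2$, and when it is ``short'' the analogous bookkeeping matches the other value of $h_n/g_n$.

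The main obstacle I anticipate is precisely the careful accounting of how many distinct chains in $[\Pi]$ share a given coatom, and in particular verifying that this count is $g_n$ and that the interval-partition-versus-not dichotomy along the $c$-orbit interacts with it to give exactly the stated coefficient. This requires combining Proposition~\ref{standard1} (size of the $c$-orbit of $ct$ and the number of interval partitions in it) with Remark~\ref{permutefactor} (how $c^{h/2}$ permutes the irreducible factors of the maximal parabolic), together with the definition of $g(\tau,\pi)$ as the minimal period that both centralizes $\underline\tau$ and stabilizes each irreducible factor of $W_{(\tau)}$. A secondary technical point is the reduction from general $W$ to the single irreducible factor of $W_{(\pi_n)}$ in which $\underline{\pi_{n-1}}$ and $\underline{\pi_n}$ differ, so that the ``$h$'' and ``$w_0$'' of Proposition~\ref{standard1} are those of that factor; here the product formula \eqref{recmwq2} and the corresponding factorization of $\mathcal{P}(W_{(\pi)})$ handle the case split cleanly.

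Once the top-level factor is identified, the induction closes immediately: multiplying the inductive product $\prod_{i=2}^{n-1}(\cdots)$ by the new factor $\frac{2g_n}{h_n} + q(g_n - \frac{2g_n}{h_n})$ gives exactly $\prod_{i=2}^{n}(\cdots)$, which is the claimed formula. The base case is rank $1$ (or rank $2$), where the product is empty and $M([\Pi],q) = 1$, consistent with a single chain in each class. I would also remark that summing the resulting formula over all classes, via \eqref{classeq}, should recover $M(W,q)$ as a consistency check, though this is not needed for the proof.
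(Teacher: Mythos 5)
Your overall strategy --- induct on the rank, peel off the coatom, use Proposition~\ref{standard2} to make sense of the truncated chain's class inside the parabolic $W_{(ct)}$, and then extract a top-level factor from Proposition~\ref{standard1} together with the definition of $g_n$ --- is exactly the route the paper takes. But the two steps you yourself flag as the crux are not carried out correctly. First, your justification that the coatoms occurring in $[\Pi]$ form precisely the $c$-orbit of $\pi_{n-1}$ rests on the slogan that ``the action of $c$ is the only symmetry preserving noncrossingness,'' which is not an argument (and is false as stated: any $w\in\stab(\pi_{n-1})$ fixes the coatom while possibly sending the rest of the chain to another noncrossing chain). The paper's Lemma~\ref{classlem1} proves the claim by passing to the interval partitions $L_i$, $L_j$ in the two $c$-orbits, invoking Proposition~\ref{wli} to upgrade ``$w(L_i)=L_j$ for some $w$'' to ``$w_0(L_i)=L_j$,'' and only then applying Proposition~\ref{standard1}. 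Proposition~\ref{standard1} alone does not give this.

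Second, the multiplicity bookkeeping is garbled. It is not true that ``not every element of the $c$-orbit of the coatom extends to a chain in $[\Pi]$'' (each $c^k(\Pi)$ is such an extension), and the relevant multiplicity is not $g_n$ per coatom. The correct statement, which is the heart of the paper's argument, is that $[\Pi]$ is partitioned into classes $\langle\Omega\rangle$ under the action of $\stab^*(\omega_{n-1})$, that there are exactly $g_n$ such classes with representatives $\Pi,c(\Pi),\dots,c^{g_n-1}(\Pi)$ (distinctness uses the minimality in the definition of $g_n$ and the distinguishability of a nontrivial permutation of the irreducible factors; exhaustiveness needs a second application of Propositions~\ref{wli} and~\ref{standard1}), and that exactly $\frac{2g_n}{h_n}$ of these $g_n$ classes have an interval partition as coatom. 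Your closing case analysis also swaps the two cases of Proposition~\ref{standard1}: the long orbit ($h_n$ elements, two interval partitions) forces $g_n=h_n$ and yields the factor $2+q(h_n-2)$, not $g_n=h_n/2$ and $1+q(g_n-1)$. More importantly, you omit the third configuration --- orbit of size $\frac{h_n}{2}$ but $g_n=h_n$ because $c^{h_n/2}$ permutes the irreducible factors of $W_{(\pi_{n-1})}$ nontrivially (Remark~\ref{permutefactor}) --- which is precisely the situation the extra condition in the definition of $g(\tau,\pi)$ is designed to capture; there each coatom of the orbit carries two distinct classes $\langle\Omega\rangle$, and without this case the expression $\frac{2g_n}{h_n}+q\big(g_n-\frac{2g_n}{h_n}\big)$ cannot be verified.
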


The proof is rather similar to that of Proposition~\ref{proprecmwq}.
We need a few lemmas.

\begin{lemm} \label{classlem1}
If $\Omega = (\omega_0,\dots,\omega_n ) \in [\Pi]$, there is $k\geq 0$ such that $\omega_{n-1} = c^k (\pi_{n-1} )$.
\end{lemm}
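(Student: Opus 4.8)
The plan is to exploit two facts already established: that $\mathcal{P}^{NC}(W,c)$ is stable under the action of $c$, and that all coatoms of $\mathcal{P}^{NC}(W,c)$ are of the form $ct$ for $t \in T$ (equivalently, they are precisely the hyperplanes $\fix(ct)$). First I would observe that $\Omega \in [\Pi]$ means $\Omega = w(\Pi)$ for some $w \in W$ with $w(\Pi) \in \mathcal{M}^{NC}(W,c)$; in particular $\omega_{n-1} = w(\pi_{n-1})$ is a coatom of $\mathcal{P}^{NC}(W,c)$, hence a reflecting hyperplane, and so is $\pi_{n-1}$ itself. So the statement reduces to: any two coatoms of $\mathcal{P}^{NC}(W,c)$ that lie in the same $W$-orbit (as elements of $\mathcal{P}(W)$) already lie in the same $c$-orbit.

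The key step is to translate this into the language of reflections. A coatom $\pi$ of $\mathcal{P}^{NC}(W,c)$ is a hyperplane, so $\pi = \fix(t')$ for a unique reflection $t' \in T$, and $\underline{\pi} = ct$ with $t = c^{-1}t'$, i.e. $t' = c t c^{-1}\cdot\ldots$; more simply, the coatoms $ct$ for $t \in T$ correspond bijectively to the reflections $t \in T$, and two coatoms $ct_1$, $ct_2$ have the same fixed hyperplane iff $ct_1 = ct_2$ iff $t_1 = t_2$. Two hyperplanes $H_1 = \fix(t_1')$ and $H_2 = \fix(t_2')$ lie in the same $W$-orbit iff the reflections $t_1', t_2'$ are conjugate in $W$; but in a finite Coxeter group all reflections in a single $W$-orbit that fix hyperplanes in the same orbit\ldots — here I would invoke that $w(\fix(t')) = \fix(wt'w^{-1})$, so $\omega_{n-1}$ and $\pi_{n-1}$ in the same $W$-orbit gives a reflection of the form $w(\pi_{n-1})$-type with $w$ arbitrary. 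The real content is that conjugation by $c$ acts transitively enough: using Proposition~\ref{standard1}, the $c$-orbit of any coatom $ct$ has size $h$ or $h/2$, and the total number of coatoms is $|T| = nh/2$; combined with Proposition~\ref{wli} controlling which orbits contain interval partitions, one counts that the $c$-orbits on coatoms match up with the $W$-orbits on the corresponding hyperplanes.

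Concretely, the cleanest route is: since $\omega_{n-1}$ and $\pi_{n-1}$ are both coatoms of $\mathcal{P}^{NC}(W,c)$ and $\omega_{n-1} = w(\pi_{n-1})$ for some $w \in W$, write $\underline{\omega_{n-1}} = ct_\Omega$ and $\underline{\pi_{n-1}} = ct_\Pi$ with $t_\Omega, t_\Pi \in T$; then $ct_\Omega = \underline{w(\pi_{n-1})}$ has fixed space $w(\pi_{n-1})$, and one shows $t_\Omega$ is $W$-conjugate to $t_\Pi$ via $w$. One then appeals to the known fact (implicit in Proposition~\ref{standard1}, or directly from \cite{brady}) that within $\mathcal{P}^{NC}(W,c)$ the $W$-orbit of a coatom, intersected with the coatom set, is exactly a single $c$-orbit — equivalently, the map sending a $c$-orbit of coatoms to the corresponding $W$-orbit of hyperplanes is injective. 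This gives $c^k$ with $c^k(\pi_{n-1}) = \omega_{n-1}$, and $k \ge 0$ can be arranged since $c$ has finite order $h$.

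The main obstacle I anticipate is precisely the injectivity claim in the last step: a priori two different $c$-orbits of coatoms could consist of $W$-conjugate hyperplanes. Resolving this cleanly is where Proposition~\ref{standard1} does the work — the orbit sizes ($h$ versus $h/2$) together with the parity/centrality dichotomy for $w_0$ pin down the orbit structure, and the count $|T| = nh/2$ forces the $c$-orbits and the relevant $W$-orbit structure to coincide. An alternative that sidesteps this is to note that if $w(\pi_{n-1}) = \omega_{n-1}$ with both noncrossing, then $w \underline{\pi_{n-1}} w^{-1} = \underline{\omega_{n-1}}$ forces $w$ (after multiplying by an element of the parabolic $W_{(\pi_{n-1})}$) to normalize things in a way that, by Proposition~\ref{standard2} applied to the standard Coxeter structure on the parabolic, must be realized by a power of $c$; I would try this first and fall back on the counting argument if the normalizer computation proves delicate.
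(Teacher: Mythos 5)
Your reduction of the lemma to the claim that two coatoms of $\mathcal{P}^{NC}(W,c)$ lying in the same $W$-orbit must already lie in the same $c$-orbit is correct and is exactly where the paper starts, but the proposal does not actually prove that claim: you state it as a ``known fact implicit in Proposition~\ref{standard1}'' and then, in your final paragraph, correctly identify it as the main obstacle without closing it. The counting heuristic you offer ($|T|=nh/2$ together with orbit sizes $h$ or $h/2$) does not by itself rule out two distinct $c$-orbits consisting of pairwise $W$-conjugate lines. The missing argument is short and is what the paper does: by Proposition~\ref{standard1} each of the two $c$-orbits contains an interval partition, say $L_i$ and $L_j$; if $L_i=L_j$ the $c$-orbits coincide; if not, then since $L_i$ and $L_j$ are $W$-conjugate, Proposition~\ref{wli} forces $w_0(L_i)=L_j$, and Proposition~\ref{standard1} then says that the $c$-orbit of $L_i$ either contains a second interval partition equal to $w_0(L_i)=L_j$ (size-$h$ case) or satisfies $w_0(L_i)=L_i$ (size-$h/2$ case, contradicting $L_i\neq L_j$); either way $L_j$ lies in the $c$-orbit of $L_i$. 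No counting and no appeal to conjugacy classes of reflections is needed.

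Two further slips. First, the coatoms of $\mathcal{P}^{NC}(W,c)$ are \emph{lines} $\fix(ct)$, not hyperplanes: the poset is ordered by reverse inclusion, so the atoms are the hyperplanes $\fix(t)$ and the coatoms are one-dimensional. Your translation into $W$-conjugacy of reflections therefore starts from the wrong object, and in any case conjugacy of reflections is not the relevant invariant here. Second, in your ``cleanest route'' you assert that $w(\pi_{n-1})=\omega_{n-1}$ makes $\underline{\pi_{n-1}}$ and $\underline{\omega_{n-1}}$ conjugate via $w$; this does not follow, since $w\,\underline{\pi_{n-1}}\,w^{-1}$ has the correct fixed space but need not lie below $c$ in absolute order, hence need not equal $\underline{\omega_{n-1}}$. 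The alternative ``normalizer'' route via Proposition~\ref{standard2} is not developed enough to evaluate. As it stands the proposal identifies the right ingredients but leaves the decisive step unproved.
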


\begin{proof}
Let $L_i$ (respectively, $L_j$) be an interval partition in the $c$-orbit of $\omega_{n-1}$ (respectively, $\pi_{n-1}$).
The fact that these exist follows Proposition~\ref{standard1}.
If $L_i=L_j$, the $c$-orbits are the same and this ends the proof. 

Suppose now that $L_i\neq L_j$. Since $\Omega\in[\Pi]$, there is $w\in W$ such that $w(L_i)=L_j$, so
Proposition~\ref{wli} shows that $w_0(L_i)=L_j$. Then, Proposition~\ref{standard1} shows that $L_i$ and $L_j$
are in the same $c$-orbit. So $\omega_{n-1}$ and $\pi_{n-1}$ are in the same $c$-orbit.
\end{proof}

\begin{lemm}
Let $\Omega = (\omega_0,\dots,\omega_n ) \in[\Pi]$, and
assume inductively that Proposition~\ref{classgen} is true for the group $W_{(\omega_{n-1})}$.
Let $\langle \Omega \rangle$ denote the class of $\Omega$ for the action of $W_{(\omega_{n-1})}$, i.e.
\[
  \langle \Omega \rangle = \{ w(\Omega) \; : \; w\in W_{(\omega_{n-1})} \} \cap \mathcal{M}^{NC}(W,c).
\]
Then the generating function of $\langle\Omega\rangle$ is:
\begin{equation} \label{genomega}
  M( \langle\Omega\rangle ,q) = q^{\chi[ \;  \omega_{n-1} \notin \mathcal{P}^I(W)  \;  ]} 
                                \prod_{i=2}^{n-1} \Big( \frac{2g_i}{h_i} + q \Big(g_i - \frac{2g_i}{h_i}  \Big)  \Big).
\end{equation}
\end{lemm}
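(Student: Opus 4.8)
The plan is to reduce the statement, using the inductive hypothesis, to Proposition~\ref{classgen} applied to the rank-$(n-1)$ group $W_{(\omega_{n-1})}$, after identifying $\langle\Omega\rangle$ with an equivalence class that lives inside that smaller group.

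First I would observe that every chain in $\langle\Omega\rangle$ has $\omega_{n-1}$ as its coatom. Indeed $W_{(\omega_{n-1})}=\stab^*(\omega_{n-1})$, since $\underline{\omega_{n-1}}$ is a Coxeter element of $\stab^*(\omega_{n-1})$; hence $W_{(\omega_{n-1})}$ fixes $\omega_{n-1}$ pointwise, so it fixes $\omega_{n-1}$ as a set and also fixes $\omega_n=\hat 1\subseteq\omega_{n-1}$. Writing $\Omega'=(\omega_0,\dots,\omega_{n-1})$, the map $\Xi\mapsto\Xi'$ that deletes the top element is a bijection from the set of chains of $\mathcal{M}^{NC}(W,c)$ with coatom $\omega_{n-1}$ onto $\mathcal{M}^{NC}(W_{(\omega_{n-1})},\underline{\omega_{n-1}})$ (here $\underline{\omega_{n-1}}$ is a standard Coxeter element of $W_{(\omega_{n-1})}$ by Proposition~\ref{standard2}); this is exactly the bijection used in the proof of Proposition~\ref{proprecmwq}, underlain by the standard order isomorphism between the interval $[\hat 0,\omega_{n-1}]$ of $\mathcal{P}^{NC}(W,c)$ and $\mathcal{P}^{NC}(W_{(\omega_{n-1})},\underline{\omega_{n-1}})$. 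That isomorphism is $W_{(\omega_{n-1})}$-equivariant, and I would check it restricts to a bijection $\langle\Omega\rangle\leftrightarrow[\Omega']$, where $[\Omega']$ denotes the equivalence class of $\Omega'$ computed inside $W_{(\omega_{n-1})}$ in the sense of our definition: for $u\in W_{(\omega_{n-1})}$ all partitions of the chain $u(\Xi')$ lie in $[\hat 0,\omega_{n-1}]$, and such a partition is noncrossing for $(W,c)$ precisely when the corresponding element of $\mathcal{P}(W_{(\omega_{n-1})})$ is noncrossing for $(W_{(\omega_{n-1})},\underline{\omega_{n-1}})$, so $u(\Xi)\in\mathcal{M}^{NC}(W,c)$ iff $u(\Xi')\in\mathcal{M}^{NC}(W_{(\omega_{n-1})},\underline{\omega_{n-1}})$.

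Next I would compare the statistic. Since interval partitions are exactly the interval refinements of the maximal partition, $\omega_{n-1}$ is an interval refinement of $\omega_n=\hat 1$ iff $\omega_{n-1}\in\mathcal{P}^I(W)$; moreover, for sub-partitions of $\omega_{n-1}$ the notion of interval refinement (read off the simple roots of the pointwise stabilizers) is the same whether computed in $W$ or in $W_{(\omega_{n-1})}$. Hence ${\rm nir}(\Xi)={\rm nir}(\Xi')+\chi[\,\omega_{n-1}\notin\mathcal{P}^I(W)\,]$ for each $\Xi\in\langle\Omega\rangle$, with the correction term constant over the class. Summing over $\langle\Omega\rangle$ gives $M(\langle\Omega\rangle,q)=q^{\chi[\,\omega_{n-1}\notin\mathcal{P}^I(W)\,]}\,M([\Omega'],q)$, the last factor being the class generating function computed inside $W_{(\omega_{n-1})}$. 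Now the inductive hypothesis (Proposition~\ref{classgen} for $W_{(\omega_{n-1})}$) evaluates $M([\Omega'],q)=\prod_{i=2}^{n-1}\bigl(\tfrac{2g_i}{h_i}+q(g_i-\tfrac{2g_i}{h_i})\bigr)$ with $g_i=g(\omega_{i-1},\omega_i)$ and $h_i=h(\omega_{i-1},\omega_i)$ computed in $W_{(\omega_{n-1})}$. It remains to note that $h(\tau,\pi)$ and $g(\tau,\pi)$ only involve the parabolic pair $W_{(\tau)}\subseteq W_{(\pi)}$, the distinguished elements $\underline\tau$ and $\underline\pi$, and conjugation by powers of $\underline\pi$ — all of which live inside $W_{(\omega_{n-1})}$ — so these quantities are unchanged when computed in $W$; and since $\Omega$ and $\Pi$ lie in the same $W$-orbit, conjugating by an element carrying $\Pi$ to $\Omega$ identifies $W_{(\pi_{i-1})}\subseteq W_{(\pi_i)}$ together with its distinguished elements with $W_{(\omega_{i-1})}\subseteq W_{(\omega_i)}$, so that $g_i=g(\pi_{i-1},\pi_i)$ and $h_i=h(\pi_{i-1},\pi_i)$, as in the statement. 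Combining these equalities gives the claimed formula.

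The step I expect to be the main obstacle is the equivariant-bijection step: one must verify carefully that the classical order isomorphism $[\hat 0,\omega_{n-1}]\cong\mathcal{P}^{NC}(W_{(\omega_{n-1})},\underline{\omega_{n-1}})$ is compatible with the $W_{(\omega_{n-1})}$-action and, in particular, that a chain below $\omega_{n-1}$ remains noncrossing for $(W,c)$ exactly when its image remains noncrossing for $(W_{(\omega_{n-1})},\underline{\omega_{n-1}})$, so that $\langle\Omega\rangle$ genuinely corresponds to a single equivalence class in the smaller group. The ${\rm nir}$-bookkeeping and the local nature of $g$ and $h$ are then routine.
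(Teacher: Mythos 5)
Your proposal is correct and follows essentially the same route as the paper: delete the top element to identify $\langle\Omega\rangle$ with the class $[\Omega']$ inside $W_{(\omega_{n-1})}$ (using Proposition~\ref{standard2} to know $\underline{\omega_{n-1}}$ is a standard Coxeter element there), account for the constant correction $q^{\chi[\,\omega_{n-1}\notin\mathcal{P}^I(W)\,]}$ to ${\rm nir}$, apply the inductive hypothesis, and check that the $g_i$ and $h_i$ are unchanged under the identification and under the $W$-action carrying $\Pi$ to $\Omega$. You merely spell out the equivariance and noncrossing-compatibility checks that the paper leaves implicit.
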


\begin{proof}
Let $\Omega'=(\omega_0,\dots,\omega_{n-1})$. Removing the last element of a chain gives a bijection between $\langle \Omega \rangle$ and
\[
  [\Omega'] =  \{ w( \Omega' ) \, : \, w \in W_{(\omega_{n-1})} \} \cap \mathcal{M}^{NC}( W_{(\omega_{n-1})} , \underline{\omega_{n-1}} ).
\]
By induction, we can obtain $M([\Omega'],q)$. Since $\Omega \in [\Pi]$, it is straightforward to check that we have 
$g(\omega_{i-1},\omega_i)=g(\pi_{i-1},\pi_i)$ and $h(\omega_{i-1},\omega_i)=h(\pi_{i-1},\pi_i)$, 
although we see $\omega_{i-1},\omega_i$ as elements of $\mathcal{P}^{NC}(W_{(\omega_{n-1})}, \underline{\omega_{n-1}})$
and $\pi_{i-1},\pi_i$ as elements of $\mathcal{P}(W,c)$.
We have $M(\langle\Omega\rangle,q) = q^{\chi[ \;  \omega_{n-1} \notin \mathcal{P}^I(W)  \;  ]}  M([\Omega'],q)$,
and this gives the formula for $M(\langle\Omega\rangle,q)$.
\end{proof}

\begin{lemm}
The minimal integer $g>0$ such that $\langle \Pi \rangle = \langle c^g(\Pi) \rangle$ is $g_n$.
\end{lemm}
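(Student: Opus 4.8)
The plan is to understand precisely which powers $c^g$ act trivially on the class $\langle\Pi\rangle$, and to show that this condition is exactly governed by the cover relation $\pi_{n-1}\lessdot\pi_n$ at the top of the chain, together with an inductive contribution that turns out to be vacuous. First I would note that $\langle\Pi\rangle = \langle c^g(\Pi)\rangle$ requires in particular that $c^g(\Pi)\in[\Pi]$ and that $c^g(\Pi)$ and $\Pi$ have the same coatom up to the action of $W_{(\omega_{n-1})}$; but $W_{(\pi_{n-1})}$ fixes $\pi_{n-1}$ pointwise, hence fixes the coatom, so the coatom of any element of $\langle\Pi\rangle$ is exactly $\pi_{n-1}$. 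Therefore $\langle\Pi\rangle = \langle c^g(\Pi)\rangle$ forces $c^g(\pi_{n-1}) = \pi_{n-1}$, i.e. $g$ must be a multiple of the size of the $c$-orbit of $\pi_{n-1}$; equivalently, in terms of $W$-elements, $c^g$ must centralize $\underline{\pi_{n-1}}$.

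Next I would translate the condition ``$c^g$ centralizes $\underline{\pi_{n-1}}$ and $c^g(\Pi)$ lies in the same $W_{(\pi_{n-1})}$-class as $\Pi$'' into the data $(g_n,h_n)$. Writing $\tau=\pi_{n-1}$ and $\pi=\pi_n$, the relevant fact is that conjugation by $c^g$ (for $g$ a multiple of the $c$-orbit size of $\tau$) induces an automorphism of $W_{(\tau)}=\stab^*(\tau)$, and by the definition of $g(\tau,\pi)$ together with Remark~\ref{permutefactor} and Proposition~\ref{standard2}, the smallest such $g$ for which this automorphism stabilizes every irreducible factor of $W_{(\tau)}$ is $g_n=g(\pi_{n-1},\pi_n)$. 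The point is that $\langle\Pi\rangle = \langle c^g(\Pi)\rangle$ is equivalent to: $c^g$ centralizes $\underline\tau$ \emph{and} conjugation by $c^g$ maps $\Pi'=(\pi_0,\dots,\pi_{n-1})$ into its $W_{(\tau)}$-orbit inside $\mathcal{M}^{NC}(W_{(\tau)},\underline\tau)$. Since conjugation by any element centralizing $\underline\tau$ and stabilizing the factors of $W_{(\tau)}$ acts as an \emph{inner} automorphism on each factor (a parabolic subgroup of a Coxeter group has all graph automorphisms accounted for, and here the automorphism fixes the Coxeter element $\underline\tau$ factorwise), such a $c^g$ automatically sends $\Pi'$ into its own $W_{(\tau)}$-orbit. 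Conversely if $c^g$ does not stabilize each factor of $W_{(\tau)}$, it permutes the blocks nontrivially and moves the chain out of its $W_{(\tau)}$-class. Hence the minimal valid $g$ is exactly $g_n$.

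I expect the main obstacle to be the ``conversely'' direction, i.e. showing that when conjugation by $c^g$ permutes the irreducible factors of $W_{(\pi_{n-1})}$ nontrivially then $c^g(\Pi)\notin\langle\Pi\rangle$, and more generally pinning down that the $W_{(\pi_{n-1})}$-class is \emph{not} enlarged by a factor-stabilizing conjugation. For the first issue, the key input is that an element of $W_{(\pi_{n-1})}$ fixes $\pi_{n-1}$ and permutes its blocks among themselves, so it cannot realize the block permutation induced by $c^g$; combined with Lemma~\ref{classlem1} (which says any element of $[\Pi]$ has coatom $c^k(\pi_{n-1})$) this gives the contradiction. For the second issue, one uses that on each irreducible factor the factor-stabilizing part of conjugation by $c^g$ fixes the local Coxeter element $\underline{\pi_{n-1}}$ restricted to that factor, so by the rigidity of standard Coxeter elements under diagram automorphisms (the only diagram automorphism fixing a Coxeter element pointwise is the identity, possibly up to the $-1$ subtlety already handled by the $g_n$ vs. $\tfrac12 h_n$ dichotomy) it acts as an inner automorphism and thus does not move the class.

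To assemble the argument I would: (1) record that every chain in $\langle\Pi\rangle$ has coatom $\pi_{n-1}$, so $\langle\Pi\rangle=\langle c^g(\Pi)\rangle$ implies $c^g(\pi_{n-1})=\pi_{n-1}$; (2) invoke the observations preceding Proposition~\ref{classgen} to see that the minimal $g$ with $c^g(\pi_{n-1})=\pi_{n-1}$ and conjugation by $c^g$ stabilizing the factors of $W_{(\pi_{n-1})}$ is precisely $g_n=g(\pi_{n-1},\pi_n)$; (3) show that for $g=g_n$ the conjugation by $c^{g}$ is realized, factor by factor, by an element of $W_{(\pi_{n-1})}$, hence $c^{g_n}(\Pi)\in\langle\Pi\rangle$ and therefore $\langle\Pi\rangle=\langle c^{g_n}(\Pi)\rangle$; and (4) for $0<g<g_n$, show either $c^g(\pi_{n-1})\neq\pi_{n-1}$ (so the coatoms already differ) or conjugation by $c^g$ permutes the blocks of $\pi_{n-1}$ nontrivially (so no element of $W_{(\pi_{n-1})}$ can match it), and in both cases $\langle c^g(\Pi)\rangle\neq\langle\Pi\rangle$. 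This yields the claimed minimality of $g_n$.
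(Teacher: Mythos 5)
Your proposal is correct and follows essentially the same route as the paper: reduce to $c^g$ fixing the coatom $\pi_{n-1}$ (forcing $g\in\{h_n,\tfrac{h_n}{2}\}$ up to multiples), show that when conjugation by $c^{g_n}$ stabilizes the irreducible factors of $W_{(\pi_{n-1})}$ its action is realized by an element of $W_{(\pi_{n-1})}$, and rule out smaller $g$ because a nontrivial permutation of the factors is detectable on the chain. Your extra justification that the factor-stabilizing automorphism is inner (via its fixing the Coxeter element factorwise) is a slightly more detailed version of the step the paper states without elaboration, not a different argument.
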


\begin{proof}
This $g$ satisfies $c^g(\pi_{n-1})=\pi_{n-1}$, so that either $g=h_n$ or $g=\frac{h_n}2$.
If we are not in the case where $c^{h_n/2}(\pi_{n-1})=\pi_{n-1}$, we have $g=h_n=g_n$.
So, suppose $c^{h_n/2}(\pi_{n-1})=\pi_{n-1}$.

Consider the factorization of the poset $\mathcal{P}(W_{(\pi_{n-1})})$ induced by the factorization of $W_{(\pi_{n-1})}$
in irreducible factors. From the definition of $g_n$, the action of $c^{g_n}$ stabilizes each factor
of the poset, so it is the same action as some element $w\in W_{(\pi_{n-1})}$. 
So $\langle \Pi \rangle = \langle c^{g_n}(\Pi) \rangle$ and this proves $g\leq g_n$.

Reciprocally, suppose that $c^g(\Pi)=w(\Pi)$ for some $w\in W_{(\pi_{n-1})}$.
It follows that $c^g$ stabilizes the irreducible factors of $W_{(\pi_{n-1})}$.
If the permutation on the factors is nontrivial, it would be possible to distinguish
$c^g(\Pi)$ from $w(\Pi)$. So $g_n\geq g$, and eventually $g=g_n$. 
\end{proof}

\begin{lemm}
The classes $\langle \Omega \rangle$ form a partition of the set $[\Pi]$.
A set of representatives is $\{\Pi,c(\Pi),\dots,c^{g_n-1}(\Pi)\}$.
\end{lemm}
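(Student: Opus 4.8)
The statement has a formal part (the $\langle\Omega\rangle$ partition $[\Pi]$) and a substantive part (the transversal); I treat them in turn.

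\emph{Partition.} I would first check that ``$\Omega'\in\langle\Omega\rangle$'' defines an equivalence relation on $[\Pi]$. If $\Omega'=w(\Omega)$ with $w\in W_{(\omega_{n-1})}=\stab^*(\omega_{n-1})$, then $w$ fixes $\omega_{n-1}$ pointwise, so $\omega'_{n-1}=\omega_{n-1}$ and $W_{(\omega'_{n-1})}=W_{(\omega_{n-1})}$; reflexivity ($w=e$), symmetry ($w^{-1}$) and transitivity (composition) follow at once, and $\langle\Omega\rangle\subseteq[\Pi]$ because $W_{(\omega_{n-1})}\le W$. Hence the sets $\langle\Omega\rangle$, for $\Omega$ ranging over $[\Pi]$, are exactly the classes of this relation, so they partition $[\Pi]$. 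Moreover each $c^k(\Pi)$ lies in $[\Pi]$ since $\mathcal P^{NC}(W,c)$ is $c$-stable, and conjugation by $c$ carries $\stab^*(\lambda)$ to $\stab^*(c(\lambda))$ and preserves $\mathcal P^{NC}(W,c)$, so $c$ permutes the classes; thus $\langle c^a(\Pi)\rangle=\langle c^b(\Pi)\rangle$ forces $\langle\Pi\rangle=\langle c^{b-a}(\Pi)\rangle$, and since $\{g:\langle\Pi\rangle=\langle c^g(\Pi)\rangle\}$ is a subgroup of $\mathbb Z$ whose least positive element is $g_n$ (the previous lemma), we get $g_n\mid b-a$. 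So $\langle\Pi\rangle,\dots,\langle c^{g_n-1}(\Pi)\rangle$ are pairwise distinct.

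\emph{Every class is one of these.} Let $\Omega\in[\Pi]$. By Lemma~\ref{classlem1}, $\omega_{n-1}=c^k(\pi_{n-1})$ for some $k$; since $c^{-k}(\Omega)\in[\Pi]$ and the $c$-action permutes classes, it suffices to show $\langle c^{-k}(\Omega)\rangle\in\{\langle\Pi\rangle,\langle c^{h_n/2}(\Pi)\rangle\}$, so I may assume $\omega_{n-1}=\pi_{n-1}$. Write $\Omega=u(\Pi)$ with $u\in W$, so $u\in\stab(\pi_{n-1})$. If conjugation by $u$ stabilises each irreducible factor of $W_{(\pi_{n-1})}$, then, exactly as in the proof of the previous lemma, $u$ acts on the interval $[\hat 0,\pi_{n-1}]\cong\mathcal P(W_{(\pi_{n-1})})$ in the same way as some $w\in W_{(\pi_{n-1})}$; as $u$ and $w$ also fix $\pi_n=\hat 1$, we obtain $\Omega=w(\Pi)$ with $w\in W_{(\pi_{n-1})}$, hence $\langle\Omega\rangle=\langle\Pi\rangle$. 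If instead conjugation by $u$ moves some irreducible factor of $W_{(\pi_{n-1})}$, then $u\notin W_{(\pi_{n-1})}=\stab^*(\pi_{n-1})$, so $u$ acts on the line $\pi_{n-1}$ as $-1$; by Proposition~\ref{standard1} (applied inside the irreducible factor of $W$ supporting $\pi_{n-1}$) such an element exists only in the folded case, where $\stab(\pi_{n-1})/\stab^*(\pi_{n-1})$ has order $2$ and is represented by $c^{h_n/2}$, which fixes $\pi_{n-1}$ and acts on it as $-1$. Thus $u$ and $c^{h_n/2}$ lie in the same coset of $\stab^*(\pi_{n-1})$, so $u=v\,c^{h_n/2}$ with $v\in W_{(\pi_{n-1})}$, whence $\Omega=v\big(c^{h_n/2}(\Pi)\big)$ and $\langle\Omega\rangle=\langle c^{h_n/2}(\Pi)\rangle$. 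In either case $\langle\Omega\rangle$ is among the $\langle c^i(\Pi)\rangle$, which together with the distinctness above shows $\{\Pi,c(\Pi),\dots,c^{g_n-1}(\Pi)\}$ is a transversal.

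\emph{Main obstacle.} The delicate point is the first case of the last step, which rests on the same fact used in the proof of the previous lemma: an element of $\stab(\pi_{n-1})$ normalising every irreducible factor of $W_{(\pi_{n-1})}$ induces on the poset $\mathcal P(W_{(\pi_{n-1})})$ exactly the action of an element of $W_{(\pi_{n-1})}$. The second delicate input is extracting from Proposition~\ref{standard1}, together with Remark~\ref{permutefactor}, the precise structure of $\stab(\pi_{n-1})/\stab^*(\pi_{n-1})$ and the way $c^{h_n/2}$ permutes the factors of $W_{(\pi_{n-1})}$ when the $c$-orbit of $\pi_{n-1}$ is folded. Everything else is bookkeeping, parallel to the proof of Proposition~\ref{proprecmwq}.
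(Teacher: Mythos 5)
Your proof is correct and follows the same skeleton as the paper's: the partition statement and the distinctness of $\langle\Pi\rangle,\dots,\langle c^{g_n-1}(\Pi)\rangle$ are extracted from the previous lemma, and exhaustiveness is reduced via Lemma~\ref{classlem1} to the case $\omega_{n-1}=\pi_{n-1}$, followed by a coset analysis of $u\in\stab(\pi_{n-1})$. The one place you genuinely diverge is the final dichotomy. The paper splits according to whether $u\in\stab^*(\pi_{n-1})$ --- in which case $\langle\Omega\rangle=\langle\Pi\rangle$ is immediate from the very definition of $\langle\cdot\rangle$, with no appeal to any poset-action principle --- or $u\in\stab(\pi_{n-1})\setminus\stab^*(\pi_{n-1})$, which it handles via $c^{h/2}$ exactly as in your second case. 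You instead split according to whether conjugation by $u$ stabilizes the irreducible factors of $W_{(\pi_{n-1})}$. This makes your first case heavier: it needs the fact (used in the paper's previous lemma only for the specific element $c^{g_n}$) that a factor-preserving element of $\stab(\pi_{n-1})$ acts on $[\hat0,\pi_{n-1}]$ like an element of $W_{(\pi_{n-1})}$. Your conclusion there is nonetheless consistent with the paper's: if $u\in\stab(\pi_{n-1})\setminus\stab^*(\pi_{n-1})$ preserves the factors, then so does $c^{h_n/2}$ (the two differ by an inner automorphism of $W_{(\pi_{n-1})}$), hence $g_n=h_n/2$ and $\langle c^{h_n/2}(\Pi)\rangle=\langle\Pi\rangle$ anyway. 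Finally, both your argument and the paper's rely on the same implicit step that the existence of an element acting as $-1$ on the line $\pi_{n-1}$ forces the second alternative of Proposition~\ref{standard1} (so that $c^{h/2}$ represents the nontrivial coset of $\stab^*(\pi_{n-1})$ in $\stab(\pi_{n-1})$); you introduce no gap beyond what the paper already glosses over.
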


\begin{proof}
The first point is clear.
From the previous lemma, the elements in the set $\{\Pi,c(\Pi),\dots,c^{g_n-1}(\Pi)\}$ are in distinct classes. 
It remains to show that the list is exhaustive.

Knowing Lemma~\ref{classlem1}, it remains to prove that if $\Omega\in[\Pi]$ is such that $\omega_{n-1}=\pi_{n-1}$,
then there is $k$ such that $ \langle \Omega \rangle = \langle c^k(\Pi) \rangle $.
Let $w\in W$ such that $\Omega = w(\Pi)$. In particular, $w(\pi_{n-1})=\pi_{n-1}$.

If $w\in W_{(\pi_{n-1})}$, we have $\langle \Omega \rangle = \langle \Pi \rangle $.
Otherwise, it means that $w\in \stab(\pi_{n-1})-\stab^*(\pi_{n-1}) $.
Since the class $[\Pi]$ contains a chain of interval partitions, we might as well assume that $\pi_{n-1}$
is an interval partition. 
It comes from Proposition~\ref{standard1} that $w c^{h/2} \in W_{(\pi_{n-1})} $.
So we obtain $ \langle \Omega \rangle = \langle c^{h/2}(\Pi) \rangle $.
This completes the proof.
\end{proof}

We can now prove Proposition~\ref{classgen}.

\begin{proof}
Since the classes $\langle \Omega \rangle$ form a partition of $[\Pi]$, we have:
\[
  M([\Pi],q) = \sum_{\langle \Omega \rangle } M(\langle \Omega \rangle ,q ),
\]
and $M([\Pi],q)$ can be obtained by summing Equation~\eqref{genomega}.

From the previous lemma, the number of distinct classes $\langle\Omega\rangle$ is $g_n$.
As we have seen above (just before Proposition~\ref{classgen}), either $g_n=h_n$ or $g_n= \frac 12 h_n$,
so that $\frac{2g_n}{h_n} $ is an integer.
From Proposition~\ref{standard1}, $\frac{2g_n}{h_n}$ among the distinct classes $\langle\Omega\rangle$ 
are such that their coatom is an interval partition. So, we get 
\[
  \sum_{\langle \Omega \rangle }  q^{\chi[ \;  \omega_{n-1} \notin \mathcal{P}^I(W)  \;  ]}  = 
     \Big( \frac{2g_n}{h_n} + q \Big(g_n - \frac{2g_n}{h_n}  \Big)  \Big).
\]
So, we get the desired formula for $M([\Pi],q)$ by
summing Equation~\eqref{genomega} over the classes $\langle \Omega \rangle $.
\end{proof}

%
%
%

\section{Hook formulas for types A and B}
\label{sechook}

This section is devoted to explicit combinatorial description in type A and B, where
Equation~\eqref{classeq} can be interpreted as a hook-length formula for trees.

\begin{defi} \label{defandretrees}
  Let $\mathcal{A}_n$ denote the set of {\it André trees} on $n$ vertices, i.e. trees such that:
\begin{itemize}
 \item each internal node has either one son or two unordered sons,
 \item the vertices are labeled with integers from $1$ to $n$, and the labels are decreasing from the root to the leaves.
\end{itemize}
\end{defi}

The 5 elements of $\mathcal{A}_4$ are represented in Figure~\ref{tree5}.
These trees were introduced by Foata and Schützenberger \cite[Chapter 5]{foata1}, who proved that $\# \mathcal{A}_n = T_n$
(in fact their definition requires increasing labels instead of decreasing here, but this is clearly equivalent).
They were also used by Stanley \cite{stanley2} to prove $K(A_n)=T_n$. 

\begin{figure}[h!tp]
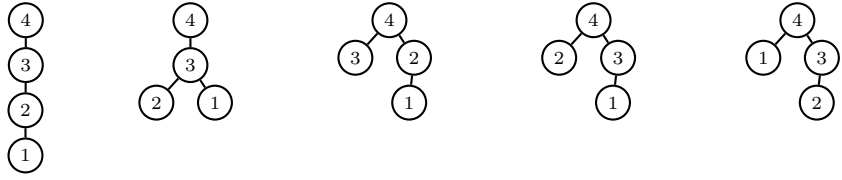

\pstree[levelsep=6mm]
       { \Tcircle{\tiny 4} }
       { 
         \pstree[levelsep=6mm]  { \Tcircle{\tiny 3} }  { 
                                                                  \pstree[levelsep=6mm]  { \Tcircle{\tiny 2} }  { \Tcircle{\tiny 1} } 
                                                        } 
       }
\hspace{1cm}
\pstree[levelsep=6mm]
       { \Tcircle{\tiny 4} }
       { 
         \pstree[levelsep=5mm,treesep=3mm]  { \Tcircle{\tiny 3} }  {  { \Tcircle{\tiny 2} } { \Tcircle{\tiny 1} }  } }
\hspace{1cm}
\pstree[levelsep=5mm,treesep=3mm]
       { \Tcircle{\tiny 4} }
       { 
          { \Tcircle{\tiny 3}  }
          {\pstree[levelsep=6mm,treesep=3mm]  { \Tcircle{\tiny 2} }  { 
                                                           { \Tcircle{\tiny 1} }
                                                        } 
          }
       }
\hspace{1cm}
\pstree[levelsep=5mm,treesep=3mm]
       { \Tcircle{\tiny 4} }
       { 
          { \Tcircle{\tiny 2}  }
          {\pstree[levelsep=6mm,treesep=3mm]  { \Tcircle{\tiny 3} }  { 
                                                           { \Tcircle{\tiny 1} }
                                                        } 
          }
       }
\hspace{1cm}
\pstree[levelsep=5mm,treesep=3mm]
       { \Tcircle{\tiny 4} }
       { 
          { \Tcircle{\tiny 1}  }
          {\pstree[levelsep=6mm,treesep=3mm]  { \Tcircle{\tiny 3} }  { 
                                                           { \Tcircle{\tiny 2} }
                                                        } 
          }
       }
\caption{The André trees with 4 vertices. \label{tree5}}
\end{figure}

Let us describe Stanley's bijection. We see it as a map $\mathcal{M}(A_{n-1}) \to \mathcal{A}_n$
that induces a bijection $\mathcal{M}(A_{n-1}) / A_{n-1}  \to \mathcal{A}_n $.
We present an example on Figure~\ref{stanmap} and refer to \cite{stanley2} for more details.
Suppose that we start from the minimal partition $1|2|3|4|5|6|7$ and at each step,
two blocks merge into a larger block. We need 6 steps before arriving to the maximal partition $1234567$.
Each vertex $v$ of the tree represents a subset $b$ of $\{1,\dots,n\} $ of cardinality at least $2$,
that appears as a block of an element in the chain. This vertex $v$ has label $i$ if the block $b$
appears after the $i$th merging. If $v_1,v_2$ are two vertices and $b_1,b_2$ the corresponding subsets
of $\{1,\dots,n\}$ then $v_1$ is below $v_2$ in the tree if $b_1\subset b_2$.
In the example of Figure~\ref{stanmap}, the correspondence between blocks and labels is:
$46 \to 1$, $15 \to 2$, $37 \to 3$, $3467 \to 4$, $125 \to 5$ , $1234567 \to 6$.

\begin{figure}[h!tp]
\parbox{4cm}{\small $1234567$ \\ $125|3467$ \\ $15|2|3467$   \\ $15|2|37|46$
            \\ $15|2|3|46|7$ \\ $1|2|3|46|5|7$ \\ $1|2|3|4|5|6|7$  }
\hspace{1cm}
\begin{pspicture}(0,0)(0,-0.7)
\pstree[levelsep=5mm,treesep=3mm]
       { \Tcircle{\tiny 6} }
       { 
          {    \pstree[levelsep=6mm,treesep=8mm]  { \Tcircle{\tiny 5} } {   { \Tcircle{\tiny 2} }   }
          }
          {\pstree[levelsep=6mm,treesep=3mm]  { \Tcircle{\tiny 4} }  { 
                                                           { \Tcircle{\tiny 3} }{ \Tcircle{\tiny 1} }
                                                        } 
          }
       }
\end{pspicture}
\caption{Stanley's bijection. \label{stanmap} } 
\end{figure}
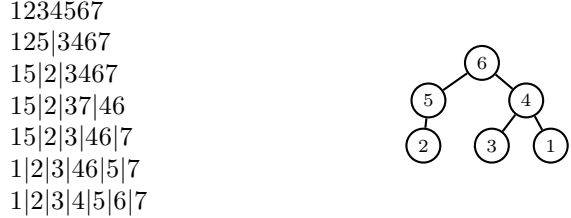

\begin{prop}
Let $\Pi\in\mathcal{M}^{NC}(A_{n-1})$, and $T\in\mathcal{A}_n$ its image under Stanley's bijection. Then we have
\[
  M([\Pi],q) = \prod_{\substack{ v \in T  \\ h_v \neq 1 } } ( 2 + q(h_v-1) ).
\]
where $h_v$ is the hook of the vertex $v$.
\end{prop}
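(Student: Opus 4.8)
Before diving into the details, let me outline the strategy. The plan is to combine the product formula of Proposition~\ref{classgen} with a careful analysis of Stanley's bijection, showing that the factor indexed by step $i$ in that proposition matches the factor indexed by a vertex of the tree $T$.

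\medskip

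\textbf{The approach.} By Proposition~\ref{classgen} we already know
\[
  M([\Pi],q) = \prod_{i=2}^{n} \Big( \frac{2g_i}{h_i} + q \Big( g_i - \frac{2g_i}{h_i} \Big) \Big),
\]
so the task reduces to two things: (a) identifying the nontrivial factors, i.e.\ showing that in type $A$ one always has $g_i = h_i$, so that each factor simplifies to $2 + q(h_i - 1)$ (a term which is trivial, equal to $1$, precisely when $h_i = 1$); and (b) setting up a bijection between the indices $i \in \{2,\dots,n\}$ with $h_i \neq 1$ and the vertices $v$ of $T$ with hook length $h_v \neq 1$, under which $h_i = h_v$. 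For point (a), the dichotomy $g_i \in \{h_i, \tfrac12 h_i\}$ recorded before Proposition~\ref{classgen} shows that $g_i = \tfrac12 h_i$ only when the relevant maximal parabolic of the symmetric group has a central longest element whose half-Coxeter power acts as $-1$ on the line $L_i$; but in type $A$ the irreducible factors are again symmetric groups $\mathfrak{S}_k$, and $w_0$ in $\mathfrak{S}_k$ (for $k \geq 3$) is never central, so this degenerate branch of Proposition~\ref{standard1} cannot occur for the relevant factor. Hence $g_i = h_i$ throughout.

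\medskip

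\textbf{The combinatorial heart.} For point (b), I would use the explicit description of Stanley's map given just before the statement. Reading $\Pi = (\pi_0 \lessdot \dots \lessdot \pi_n)$ as a sequence of mergings $\mathbb{R}^n \to \cdots$, the cover relation $\pi_{i-1} \lessdot \pi_i$ corresponds to the $(i-1)$-st merging, which creates a new block $b$ of size $\geq 2$; this is precisely the vertex $v$ of $T$ with label $i-1$ — except that one also has to match the index ranges, and here the bookkeeping is that step $i$ of the chain ($2 \le i \le n$) corresponds to the merging producing the vertex labelled $i-1$, with the root of $T$ (labelled $n$, representing the full block $\{1,\dots,n\}$, equivalently $\pi_n = \hat 1$) corresponding to $i = n$. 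Wait — more carefully, the vertex set of $T$ has $n$ elements labelled $1,\dots,n$, the leaves being the singletons and the $i$-th merging producing the vertex labelled $i$; the chain has $n$ cover relations $\pi_{i-1}\lessdot\pi_i$ for $i=1,\dots,n$, and step $i$ produces the vertex labelled $i$. So the $n-1$ factors indexed by $i \in \{2,\dots,n\}$ match the $n-1$ non-leaf vertices, with the first cover relation $\pi_0 \lessdot \pi_1$ (producing the leaf configuration, i.e.\ $h(\pi_0,\pi_1)$ being the Coxeter number of an $A_1$ factor, so $h_1 = 2$ — but $i=1$ is excluded from the product precisely as the "first arc", being always an interval refinement). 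The remaining claim is that $h(\pi_{i-1}, \pi_i)$, the Coxeter number of the irreducible factor of $W_{(\pi_i)}$ where $\underline{\pi_{i-1}}$ and $\underline{\pi_i}$ differ, equals the hook length $h_v$ of the corresponding vertex $v$ of $T$. Here I would invoke the standard fact that for the noncrossing partition lattice of $\mathfrak{S}_n$, the hook length of a vertex $v$ in the André tree (number of descendants including $v$, i.e.\ the size of the subtree) is exactly one more than the number of mergings "internal to" the block $b$ of $v$; and this count equals the Coxeter number $h_v$ of the symmetric group $\mathfrak{S}_{h_v}$ which is the irreducible factor of $W_{(\pi_i)}$ acting on the block $b$ — because a block of noncrossing partition in $\mathfrak{S}_n$ of "internal complexity" $r$ supports a parabolic factor of rank $r$, hence type $A_r$ with Coxeter number $r+1$. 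Tracking that $b$ has been assembled from exactly $h_v - 1$ singletons via exactly $h_v - 1$ merging steps below and including step $i$ gives $h(\pi_{i-1},\pi_i) = h_v$.

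\medskip

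\textbf{Main obstacle.} The routine part is the algebra; the delicate point is making the index/label matching in Stanley's bijection fully precise — in particular checking that "the irreducible factor of $W_{(\pi_i)}$ where $\underline{\pi_{i-1}}$ and $\underline{\pi_i}$ differ" is genuinely the factor attached to the block $b$ newly created at step $i$, and that its Coxeter number is the subtree size at $v$. This requires knowing how the poset $\mathcal{P}^{NC}(\mathfrak{S}_n)$ factors over the blocks of a partition (each block of size $\ell$, with its induced noncrossing structure, contributes a copy of $\mathcal{P}^{NC}(\mathfrak{S}_\ell)$) together with the observation that merging two blocks $b_1, b_2$ into $b$ changes only the factor attached to $b$, whose rank jumps by exactly the number of prior internal mergings of $b_1$ plus those of $b_2$ plus one — which is exactly what the subtree-size recursion for $h_v$ records. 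I would therefore devote most of the write-up to this correspondence, after which plugging $g_i = h_i = h_v$ into Proposition~\ref{classgen} finishes the proof immediately.
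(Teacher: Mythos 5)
Your overall strategy (specialize Proposition~\ref{classgen} and match step $i$ of the chain to the vertex of $T$ created at that step) is the same as the paper's, but both of your key computations (a) and (b) are wrong, and the errors do not cancel. First, it is \emph{not} true that $g_i=h_i$ throughout in type $A$. If step $i$ merges two blocks of sizes $a$ and $b$, the relevant irreducible factor of $W_{(\pi_i)}$ is $\mathfrak{S}_{a+b}$, so $h_i=a+b$; when $a=b=1$ (the vertex $v$ is a leaf), $W_{(\pi_{i-1})}$ has no factor supported on the new block, conjugation by $\underline{\pi_i}$ already fixes $\underline{\pi_{i-1}}$ and stabilizes all its factors, and $g_i=1=\tfrac12 h_i$. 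This is precisely what makes the leaf factors equal to $\tfrac{2g_i}{h_i}+q\bigl(g_i-\tfrac{2g_i}{h_i}\bigr)=1$ and explains the restriction to $h_v\neq 1$ in the product. Your alternative explanation — that the factor ``$2+q(h_i-1)$'' is trivial when $h_i=1$ — cannot be right: $h_i$ is a Coxeter number, hence always $\geq 2$, and even formally $2+q\cdot 0=2\neq 1$. Your justification for $g_i=h_i$ via centrality of $w_0$ is also off-target: the $\tfrac h2$ branch of Proposition~\ref{standard1} \emph{does} occur in type $A$ (equal-size splits of a cycle of even length, e.g.\ $12|34$ in $\mathfrak{S}_4$); what forces $g_i=h_i$ when $\max(a,b)\geq 2$ is the second condition in the definition of $g$, namely that conjugation by $\underline{\pi_i}^{h_i/2}$ must stabilize each irreducible factor of $W_{(\pi_{i-1})}$, and it instead permutes the factors sitting inside the merged block.

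Second, the dictionary $h_i=h_v$ is off by one: the subtree of $T$ rooted at $v$ has $(a-1)+(b-1)+1=a+b-1$ vertices, so $h_v=a+b-1=h_i-1$. (Your own intermediate reasoning — rank $r$ parabolic of type $A_r$ has Coxeter number $r+1$ — already gives $h_i=h_v+1$, contradicting the equality you then assert.) With your two claims as stated, the product from Proposition~\ref{classgen} would come out as $\prod_v\bigl(2+q(h_v-2)\bigr)$ over \emph{all} vertices, which is not the claimed identity; with the corrected statements ($g_i=h_i=h_v+1$ for non-leaves, $g_i=\tfrac12 h_i$ for leaves) one gets exactly $\prod_{h_v\neq 1}\bigl(2+q(h_v-1)\bigr)$, which is how the paper concludes. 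So the proof needs both of these points repaired before it is correct.
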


\begin{proof}
Let $2\leq i\leq n$. There are $a>0$ and $b>0$ such that $\pi_i$ is obtained from $\pi_{i-1}$ by merging
two blocks of size $a$ and $b$ into one block of size $a+b$. The integer $h_i$ is the Coxeter number 
of $\mathfrak{S}_{a+b}$, i.e. $h_i=a+b$. If $a>1$ or $b>1$, i.e. one of the two blocks has cardinality at least 2, 
there is a nontrivial factor $\mathfrak{S}_a$ or $\mathfrak{S}_b$ that needs $a+b$ rotations through the cycle to 
go back to itself, so that $g_i=a+b$. But if $a=b=1$, we have $g_i=1=\frac{h_i}2$.

Let $v$ be the vertex of $T$ with label $i$. From the properties of the bijection, 
the two sons of $v$ contains $a-1$ and $b-1$ vertices, and $h_v=a+b-1$.
So, we obtain:
\[
  \frac{2g_i}{h_i} + q(g_i - \frac{2g_i}{h_i} ) =
  \begin{cases} 2 + q(h_v-1) \text{ if } h_v>1, \\
                1 \text{ otherwise.}
  \end{cases}
\]
So Proposition~\ref{classgen} specializes as stated above.
\end{proof}

As a consequence, Equation~\eqref{classeq} gives the following:

\begin{theo}
\begin{equation} \label{hookA}
  \prod_{i=1}^{n-1} ( i+1 + q(n-i) )  =  \sum_{T \in \mathcal{A}_n } \prod_{\substack{ v \in T  \\ h_v \neq 1 } } ( 2 + q(h_v-1) ). 
\end{equation}
\end{theo}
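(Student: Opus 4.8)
The plan is to deduce Equation~\eqref{hookA} directly from the general identity~\eqref{classeq}, $M(W,q) = \sum_{[\Pi]} M([\Pi],q)$, applied to the group $W = A_{n-1}$, by translating both sides into the language of André trees via Stanley's bijection. The left-hand side is immediate: we have already computed in Section~\ref{secncc} that $M(A_{n-1},q) = \prod_{i=1}^{n-1}(i+1+q(n-i))$, which is exactly the left-hand side of~\eqref{hookA}. So the entire content of the theorem is in understanding the right-hand side of~\eqref{classeq}.

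First I would recall that Stanley's bijection, presented as a map $\mathcal{M}(A_{n-1}) \to \mathcal{A}_n$, induces a bijection $\mathcal{M}(A_{n-1})/A_{n-1} \to \mathcal{A}_n$ between $A_{n-1}$-orbits of maximal chains of set partitions and André trees on $n$ vertices. The key point to check here is that the equivalence classes $[\Pi]$ of \emph{noncrossing} chains are in bijection with the orbits of \emph{all} maximal chains: indeed, by the Proposition in Section~\ref{secncc} (each orbit $O \in \mathcal{M}(W)/W$ contains an element of $\mathcal{M}^I(W) \subseteq \mathcal{M}^{NC}(W,c)$), every orbit of maximal chains of set partitions meets $\mathcal{M}^{NC}(A_{n-1},c)$, and the intersection of an orbit with $\mathcal{M}^{NC}(A_{n-1},c)$ is precisely one equivalence class $[\Pi]$. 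Hence the classes $[\Pi]$ are indexed by André trees $T \in \mathcal{A}_n$, and~\eqref{classeq} becomes a sum over $\mathcal{A}_n$. Then, for each class, the preceding Proposition identifies $M([\Pi],q)$ with $\prod_{v \in T,\, h_v \neq 1}(2 + q(h_v-1))$, where $T$ is the André tree attached to (the orbit of) $\Pi$ and $h_v$ is the hook length of the vertex $v$. Substituting this into~\eqref{classeq} gives~\eqref{hookA} directly.

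The main obstacle — and the step that deserves care — is the bookkeeping ensuring that summing $M([\Pi],q)$ over equivalence classes of noncrossing chains matches summing over André trees with no double-counting or omission. This rests on two facts: that distinct equivalence classes $[\Pi]$ correspond to distinct $A_{n-1}$-orbits of $\mathcal{M}(A_{n-1})$ (because $[\Pi]$ is exactly the intersection of a single orbit with $\mathcal{M}^{NC}(A_{n-1},c)$, which is nonempty), and that Stanley's map really does yield a bijection on orbits (cited from~\cite{stanley2}). Once these are in place, the identification of hook lengths — the hook $h_v$ of the vertex labelled $i$ equals $a+b-1$ where $\pi_i$ merges blocks of sizes $a$ and $b$, so $a+b = h_i$ and the two subtrees have $a-1$ and $b-1$ vertices — is exactly the content of the preceding Proposition and needs no further argument here. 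So the proof is essentially a one-line combination of~\eqref{classeq}, the formula for $M(A_{n-1},q)$, Stanley's bijection, and the preceding Proposition.

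\begin{proof}
By Equation~\eqref{classeq} applied to $W = A_{n-1}$, we have $M(A_{n-1},q) = \sum_{[\Pi]} M([\Pi],q)$, the sum being over the distinct equivalence classes of noncrossing chains in $\mathcal{M}^{NC}(A_{n-1},c)$. By the results of Section~\ref{secncc}, $M(A_{n-1},q) = \prod_{i=1}^{n-1}(i+1+q(n-i))$, which is the left-hand side of~\eqref{hookA}. On the other hand, by the Proposition of Section~\ref{secncc}, every $A_{n-1}$-orbit in $\mathcal{M}(A_{n-1})$ meets $\mathcal{M}^{NC}(A_{n-1},c)$, and the intersection of such an orbit with $\mathcal{M}^{NC}(A_{n-1},c)$ is exactly one equivalence class $[\Pi]$; thus the classes $[\Pi]$ are in bijection with $\mathcal{M}(A_{n-1})/A_{n-1}$, hence (via Stanley's bijection, \cite{stanley2}) with $\mathcal{A}_n$. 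For the class $[\Pi]$ corresponding to $T \in \mathcal{A}_n$, the preceding Proposition gives $M([\Pi],q) = \prod_{v \in T,\, h_v \neq 1}(2 + q(h_v-1))$. Summing over all classes yields
\[
  \prod_{i=1}^{n-1}(i+1+q(n-i)) = \sum_{T \in \mathcal{A}_n} \prod_{\substack{v \in T \\ h_v \neq 1}}(2 + q(h_v-1)),
\]
which is Equation~\eqref{hookA}.
\end{proof}
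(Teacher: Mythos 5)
Your proof is correct and follows essentially the same route as the paper, which presents the theorem as an immediate consequence of Equation~\eqref{classeq}, the formula $M(A_{n-1},q)=\prod_{i=1}^{n-1}(i+1+q(n-i))$ from Section~\ref{secncc}, and the preceding Proposition expressing $M([\Pi],q)$ as the hook product over the André tree. Your explicit justification that the equivalence classes $[\Pi]$ biject with the $A_{n-1}$-orbits of $\mathcal{M}(A_{n-1})$ (hence with $\mathcal{A}_n$ via Stanley's bijection) is a worthwhile point that the paper leaves implicit, but it is the same argument.
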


For example, for $n=4$, and taking the 5 trees as in Figure~\ref{tree5}, we get:
\begin{align*}
  (2+3q)(3+2q)(4+q) & = (2+q)(2+2q)(2+3q) + (2+2q)(2+3q) +  \\
                    & \qquad (2+q)(2+3q) + (2+q)(2+3q)+(2+q)(2+3q).
\end{align*}

We have to make the connection with previously-known results.
Let $\mathcal{T}_n$ denote the set of binary plane trees on $n$ vertices, 
and $\mathcal{T}^\ell_n$ denote the set of pairs $(T,L)$ where $T\in\mathcal{T}_n$
and $L$ is a decreasing labeling of the vertices. It is well-known that the number 
such labelings $L$ for a given $T$ is 
\[
   \frac{n!}{ \prod_{v\in T} h_v }.
\]
Moreover, there is a map $\mathcal{T}^\ell_n \to \mathcal{A}_n $ which consists in ``forgetting''
the notion of left and right among the sons of each internal vertex. It is such that each $T \in \mathcal{A}_n $ 
has $2^{{\rm in}(T)}$ preimages, where ${\rm in}(T)$ is the number of internal vertices of $T$ 
(i.e. $v\in T$ such that $h_v>1$).
Then, we can rewrite the right-hand side of \eqref{hookA}:
\begin{align*}
    & \sum_{T \in \mathcal{A}_n } \prod_{\substack{ v \in T  \\ h_v \neq 1 } } ( 2 + q(h_v-1) )
  =   \frac{1}{2^n} \sum_{T \in \mathcal{A}_n }  2^{{\rm in}(T)} \prod_{ v \in T } ( 2 + q(h_v-1) ) \\
  & = \frac{1}{2^n} \sum_{T \in \mathcal{T}^\ell_n } \prod_{ v \in T } ( 2 + q(h_v-1) ) 
  =   \frac{n!}{2^n} \sum_{T \in \mathcal{T}_n } \prod_{ v \in T } \big( \frac{ 2 + q(h_v-1) } {h_v} \big).
\end{align*}

So we arrive at
\[
  \prod_{i=1}^{n-1} ( i+1 + q(n-i) ) = \frac{n!}{2^n} \sum_{T \in \mathcal{T}_n } \prod_{ v \in T } ( q + \frac{2-q}{h_v} ).
\]
The particular case $q=1$ is Postnikov's hook-length formula \cite[Corollary 17.3]{postnikov},
proved in investigating the volume of generalized permutohedra.
A one-parameter generalization was conjectured by Lascoux and proved by Du and Liu \cite{du},
it is exactly the previous equation up to the change of variable $(q,2-q)\to(q,1)$.

Let us turn to the type B analogue, where we can adapt Stanley's bijection.
(Note that a type B analogue of André trees or permutations has been considered by Purtill \cite{purtill}, in 
relation with type B Springer numbers.)

For brevity, the integers $-1$, $-2$, etc. will be represented $\bar 1$, $\bar 2$, etc.
A set partition of type B is a set partition of $\{\bar n,\dots, \bar 1\}\cup \{1,\dots,n\}$, unchanged under 
the map $x\to -x$, and such that there is at most one block $b$ such that $b=-b$ (called the 0-block when it exists).
For example, $1 \bar 2 5|\bar 1 2 \bar 5 | 3 \bar 3 6 \bar 6 | 4 | \bar 4 \in \mathcal{P}(B_6)$.

\begin{defi}
  A {\it pointed André tree} is an André tree with a distinguished vertex $v\in T$ having 0 or 1 son.
  Let $\mathcal{A}^*_n$ denote the set of pointed André trees on $n$ vertices.
\end{defi}

A tree $T\in \mathcal{A}^*_n$ is represented with the convention that the distinguished vertex has a starred label $i^*$.
We can create a new tree as follows: increase the labels by 1, then add a new vertex with label $1$ attached to the 
distinguished vertex. This is clearly a bijection between $\mathcal{A}^*_n$ and $\mathcal{A}_{n+1}$, showing that 
$\# \mathcal{A}^*_n = T_{n+1} = K(B_n) $. See Figure~\ref{treebij} for an example.

\begin{figure}[h!tp]
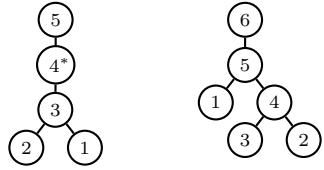

\pstree[levelsep=6mm]
       { \Tcircle{\tiny 5} }
       { \pstree[levelsep=6mm,treesep=1mm]
            {   \Tcircle{\tiny $4^*$ \hspace{-3mm} } }
            {{  \pstree[levelsep=5mm,treesep=3mm]
            { \Tcircle{\tiny 3} }
            {{ \Tcircle{\tiny 2} }{ \Tcircle{\tiny 1}}}}}
       }
\hspace{1cm}  
\pstree[levelsep=6mm]
       { \Tcircle{\tiny 6} }
       { \pstree[levelsep=5mm,treesep=3mm]
            {   \Tcircle{\tiny 5} }
            {{ \Tcircle{\tiny 1} }{  \pstree[levelsep=5mm,treesep=3mm]
            { \Tcircle{\tiny 4} }
            {{ \Tcircle{\tiny 3} }{ \Tcircle{\tiny 2}}}}}
       }
\caption{ The bijection $\mathcal{A}^*_n \to \mathcal{A}_{n+1} $. \label{treebij} } 
\end{figure}

Let $\Pi=(\pi_0,\dots,\pi_n)\in\mathcal{M}(B_n)$. We build a tree $T\in \mathcal{A}_n^*$ by adapting Stanley's map. 
A vertex in $T$ represents either the 0-block in some $\pi_i$, or a pair of distinct opposite blocks in some $\pi_i$
where the elements of the pair have cardinality at least 2.
This vertex has label $i$ if this 0-block, or pair of opposite blocks, appears in $\pi_i$ but not in $\pi_{i-1}$.
A vertex $v_1$ is below another vertex $v_2$ in the tree when the blocks represented by $v_1$ are included in the blocks 
represented by $v_2$.
Eventually, we have the following rule: the distinguished vertex has label $i$ if and only if 
$\pi_{i}$ has a 0-block, and $\pi_{i-1}$ has none.
See Figure~\ref{stanleyB} for an example.

\begin{figure}[h!tp]
\parbox{4cm}{\small $1\bar 1 2 \bar 2 3 \bar 3 4 \bar 4 5 \bar 5 6 \bar 6 $ \\ $1\bar 13 \bar 3 |2\bar 456|\bar 24 \bar 5 \bar 6$ 
              \\ $1\bar 13 \bar 3 |25|\bar 2\bar 5 | 4\bar 6 |\bar 46$  \\ $13 | \bar 1 \bar 3 |25|\bar 2\bar 5 | 4\bar 6 |\bar 46$ 
              \\ $1  | \bar 1 | 3 | \bar 3 | 2 5 |\bar 2\bar 5 | 4\bar 6 |\bar 46$ 
              \\ $1  | \bar 1 | 2 | \bar 2 | 3  |\bar 3 | 5 | \bar 5 | 4\bar 6 |\bar 46$ 
              \\ $1  | \bar 1 | 2 | \bar 2 | 3 | \bar 3 | 4 | \bar 4 | 5 | \bar 5 | 6 | \bar 6$ }
\hspace{1cm}
\begin{pspicture}(0,0)(0,-0.7)
\pstree[levelsep=5mm,treesep=3mm]
       { \Tcircle{\tiny 6} }
       { 
          {\pstree[levelsep=6mm,treesep=3mm]  { \Tcircle{\tiny 5} }  { 
                                                           { \Tcircle{\tiny 2} }{ \Tcircle{\tiny 1} }
                                                        } 
          }
          {    \pstree[levelsep=6mm,treesep=8mm]  { \Tcircle{\tiny $4^*$ \hspace{-3mm} } } {   { \Tcircle{\tiny 3} }   }
          }
       }
\end{pspicture}
\caption{Stanley's bijection adapted to type B. \label{stanleyB} } 
\end{figure}
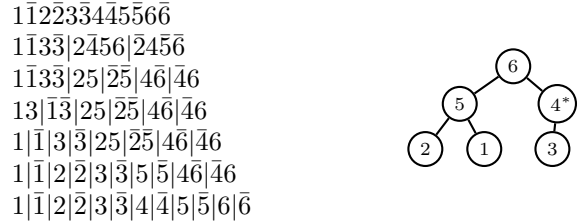

\begin{prop}
Let $\Pi\in\mathcal{M}(B_n)$ and $T\in\mathcal{A}^*_n$ its image under the bijection we have just defined.
For any vertex $v$ of the tree $T\in\mathcal{A}^*_n$, we define a factor $\beta(v)$ to be $1+q(h_v-1)$ if $v$ 
belongs to the minimal path joining the root to the distinguished vertex, $2+q(h_v-1)$ otherwise. Then we have:
\[
  M([\Pi],q) = \prod_{\substack{ v\in T \\ h_v \neq 1 }} \beta(v).
\]
\end{prop}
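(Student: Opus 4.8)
The plan is to mirror the proof of the type A hook formula, using Proposition~\ref{classgen} and analyzing what the quantities $h_i = h(\pi_{i-1},\pi_i)$ and $g_i = g(\pi_{i-1},\pi_i)$ become under the type B version of Stanley's bijection. Fix $\Pi = (\pi_0,\dots,\pi_n) \in \mathcal{M}(B_n)$ and let $2 \le i \le n$. The cover relation $\pi_{i-1} \lessdot \pi_i$ is a merge of blocks in the type B set partition, and there are three cases for how the type B block structure changes: (a) two pairs of opposite non-0-blocks $\{b,-b\}$ and $\{b',-b'\}$ merge into one pair $\{b\cup b', -b\cup -b'\}$ — here the relevant irreducible factor of $W_{(\pi_i)}$ is a symmetric group $\mathfrak{S}_{|b|+|b'|}$, so $h_i = |b|+|b'|$; (b) a pair $\{b,-b\}$ merges with the 0-block (or two opposite blocks merge to create a 0-block, i.e.\ $b$ and $-b$ merge) — here the relevant factor is a hyperoctahedral group $B_k$ for appropriate $k$, so $h_i = 2k$; and I should identify which vertex $v$ of $T$ carries label $i$ in each case, and compute $h_v$ via the tree structure exactly as in the type A proof.

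Next I would compute $g_i$ in each case. In case (a), as in the type A argument, if at least one of $b,b'$ has cardinality $\ge 2$ the corresponding factor $\mathfrak{S}_a$ (with $a\ge 2$) needs a full rotation of length $h_i$ to return to itself while stabilizing its irreducible factors, so $g_i = h_i$; and the vertices of $T$ in case (a) are never on the path from the root to the distinguished vertex, so $\beta(v) = 2 + q(h_v - 1)$, and one checks $\frac{2g_i}{h_i} + q(g_i - \frac{2g_i}{h_i}) = 2 + q(h_i - 1) = 2 + q(h_v-1)$. If $b = b' = $ singletons (so $h_i = 2$), then $g_i = 1 = \frac{h_i}{2}$ and the factor contributes $1$; correspondingly $h_v = 1$ and the vertex is excluded from the product, consistent. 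In case (b), the relevant factor is $B_k$; here $c^{h_i/2}$ acts as $-1$ on the line corresponding to this factor, so one is in the second alternative of the dichotomy $g_i \in \{h_i, h_i/2\}$, giving $g_i = h_i/2$. The vertex $v$ with label $i$ in case (b) is precisely a vertex on the minimal path from the root to the distinguished vertex (this is forced by the construction: the 0-block, once created, persists and sits "above" everything in its chain of merges), and one checks $\frac{2g_i}{h_i} + q(g_i - \frac{2g_i}{h_i}) = 1 + q(\frac{h_i}{2} - 1) = 1 + q(h_v - 1)$, using $h_v = \frac{h_i}{2}$ — the hook of $v$ in the pointed André tree counts the vertex and its descendants, which after the identification corresponds to half the size of the type B block. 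The case where this is the merge of two opposite singletons creating a 0-block of size $2$ (so $h_i = 2$, $h_v = 1$) again contributes $1$ and is excluded.

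I would then assemble these computations: $M([\Pi],q) = \prod_{i=2}^n \big( \frac{2g_i}{h_i} + q(g_i - \frac{2g_i}{h_i}) \big)$ by Proposition~\ref{classgen}, and the case analysis shows each factor equals $\beta(v_i)$ where $v_i$ is the vertex labeled $i$, with $\beta(v) = 1 + q(h_v-1)$ on the root-to-distinguished-vertex path and $2 + q(h_v-1)$ elsewhere, and the factor is $1$ exactly when $h_{v_i} = 1$ (leaf case). Reindexing the product over vertices $v \in T$ with $h_v \neq 1$ (the vertex labeled $1$ is always a leaf, so it does not appear, matching the range $i \ge 2$) yields the stated formula.

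The main obstacle I anticipate is the bookkeeping in case (b): correctly matching the hyperoctahedral factor, verifying that $c^{h_i/2}$ indeed acts as $-1$ there (invoking Proposition~\ref{standard1} and Remark~\ref{permutefactor} for the general standard Coxeter element rather than a bipartite one), and — most delicately — checking that the vertices arising from case (b) are exactly the vertices lying on the minimal path from the root to the distinguished vertex, and that the tree-hook $h_v$ of such a vertex equals $h_i/2$ rather than $h_i$. This requires a careful reading of the type B bijection: the 0-block, once it appears, is the unique self-opposite block and every subsequent merge involving it keeps it self-opposite, so in the tree these vertices form an initial segment of the path down to the distinguished vertex; and a vertex representing a self-opposite block of size $2m$ has $m - 1$ proper descendants corresponding to the $m-1$ prior merges building it up (counting pairs of opposite blocks), plus the vertex itself and possibly the distinguished leaf below, giving hook $m = h_i/2$. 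Making this correspondence precise — in particular distinguishing the two sub-cases of (b), namely merging an opposite pair into the existing 0-block versus collapsing $b$ with $-b$ to create the 0-block for the first time — is where the care is needed; everything else follows the type A template.
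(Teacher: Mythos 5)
Your proposal is correct and follows essentially the same route as the paper: apply Proposition~\ref{classgen} and compute $h_i$ and $g_i$ for each cover relation according to how the type~B blocks merge (two opposite pairs into an opposite pair; creation of the 0-block; absorption into the 0-block), then match each factor $\frac{2g_i}{h_i}+q(g_i-\frac{2g_i}{h_i})$ with $\beta(v)$ for the vertex $v$ labeled $i$, noting that the 0-block cases are exactly the vertices on the root-to-distinguished-vertex path and that there $h_v=h_i/2$ while $g_i=h_i/2$. The paper's proof is the same case analysis, merely splitting your case (b) explicitly into its two sub-cases ($A_{j-1}\to B_j$ and $A_{j-1}\times B_k\to B_{j+k}$), which you correctly flag as the point requiring care.
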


\begin{proof}
Let $2\leq i\leq n$, let $v$ be the vertex with label $i$.

Suppose first that $\pi_i$ is obtained from $\pi_{i-1}$ by merging two pairs of distinct opposite blocks into
a pair of distinct opposite blocks
(such as $25|\bar 2 \bar 5$ and $4 \bar 6|\bar4 6 $ in the example). This is the case where $v$ is not in the 
minimal path from the root to the distinguished vertex.
This means that $W_{(\pi_i)}$ is obtained from $W_{(\pi_{i-1})}$ by replacing a factor $\mathfrak{S}_a\times \mathfrak{S}_b$
with $\mathfrak{S}_{a+b}$. As in the type A case, we get $g_i=h_i=a+b+1$, and $a-1$, $b-1$ are the number of vertices in
the subtrees of $v$. This gives $\frac{2g_i}{h_i} + q(g_i - \frac{2g_i}{h_i} ) = \beta(v)$.

Suppose then that $\pi_i$ is obtained from $\pi_{i-1}$ by merging two pairs of distinct opposite blocks into a 
0-block (such as $13$ and $\bar 1\bar 3$ in the example). This is the case where $v$ is the distinguished vertex.
This means that $W_{(\pi_i)}$ is obtained from $W_{(\pi_{i-1})}$ by replacing a factor $\mathfrak{S}_j = A_{j-1}$
into $B_j$ where $j$ is the size of the 0-block, and also the hook-length of $v$. We obtain $h_i=2j$, and $g_i=j$.
Also in this case, this gives $\frac{2g_i}{h_i} + q(g_i - \frac{2g_i}{h_i} ) = \beta(v)$.

Eventually, suppose that $\pi_i$ is obtained from $\pi_{i-1}$ by merging a pair of distinct opposite blocks to the
0-block (such as $2\bar 456|\bar 2 4 \bar 5 \bar 6$ in the example). This is the case where $v$ is in the minimal 
path from the root to the distinguished vertex (but is not the distinguished vertex).
This means that $W_{(\pi_i)}$ is obtained from $W_{(\pi_{i-1})}$ by replacing a factor $A_{j-1} \times B_k $
into $B_{j+k}$. Here, $k>0$ is the number of vertices in the subtree of $v$ containing the distinguished vertex,
and $j-1\geq 0$ is the number of vertices in the other subtree.
We get $h_i=2(j+k)$,  $g_i=j+k=h_v$,
and $\frac{2g_i}{h_i} + q(g_i - \frac{2g_i}{h_i} ) = \beta(v)$. 

So Proposition~\ref{classgen} specializes as stated above.
\end{proof}

So, in the type B case, Equation~\eqref{classeq} gives:

\begin{theo}
\[
  \prod_{i=1}^{n} ( i + q(n-i) )  =  \sum_{T \in \mathcal{A}^*_n } \prod_{\substack{ v \in T \\ h_v \neq 1 } } \beta(v).
\]
\end{theo}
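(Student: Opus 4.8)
The plan is to derive this type B identity exactly as the type A identity~\eqref{hookA} was derived: combine the general class generating function from Proposition~\ref{classgen} with the partition identity~\eqref{classeq}, using the preceding proposition to rewrite each term. First I would invoke Equation~\eqref{classeq} applied to $W=B_n$, which gives $M(B_n,q) = \sum_{[\Pi]} M([\Pi],q)$ where the sum ranges over the equivalence classes of maximal chains in $\mathcal{M}^{NC}(B_n,c)$. The left-hand side is computed from the degrees of $B_n$, namely $d_i = 2i$ for $1\le i\le n$ and $h=2n$, so the formula $M(W,q) = \frac{n!}{|W|}\prod_{i=1}^n(d_i+q(h-d_i))$ with $|B_n| = 2^n n!$ yields
\[
  M(B_n,q) = \frac{n!}{2^n n!}\prod_{i=1}^n\bigl(2i + q(2n-2i)\bigr) = \prod_{i=1}^n\bigl(i + q(n-i)\bigr),
\]
which is precisely the left-hand side of the asserted identity.

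For the right-hand side, I would use the bijection $\mathcal{M}(B_n)/B_n \to \mathcal{A}^*_n$ described just before the previous proposition, which identifies each equivalence class $[\Pi]$ with a unique pointed André tree $T \in \mathcal{A}^*_n$. The previous proposition then gives $M([\Pi],q) = \prod_{v\in T,\ h_v\ne 1}\beta(v)$, where $\beta(v) = 1+q(h_v-1)$ on the path from the root to the distinguished vertex and $\beta(v) = 2+q(h_v-1)$ elsewhere. Substituting this term by term into the sum over classes, and noting that as $[\Pi]$ ranges over all equivalence classes $T$ ranges over all of $\mathcal{A}^*_n$ exactly once, produces
\[
  \prod_{i=1}^n\bigl(i+q(n-i)\bigr) = \sum_{T\in\mathcal{A}^*_n}\prod_{\substack{v\in T\\ h_v\ne 1}}\beta(v),
\]
which is the claim. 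The one point requiring a small check is that the bijection $\mathcal{M}(B_n)/B_n \to \mathcal{A}^*_n$ restricts correctly to the enumeration at hand: the equivalence classes $[\Pi]$ of \emph{noncrossing} chains are indexed by orbits of maximal chains of set partitions (this is the content of the earlier section where $\mathcal{M}^{NC}(W,c)/{\sim}$ is shown to correspond to $\mathcal{M}(W)/W$), so that summing $M([\Pi],q)$ over noncrossing-chain classes is the same as summing over $\mathcal{A}^*_n$.

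I do not expect any serious obstacle: the theorem is a direct corollary of Proposition~\ref{classgen}, the preceding proposition, Equation~\eqref{classeq}, and the degree formula, all of which are already established. The only mildly delicate step is bookkeeping the three cases of the preceding proposition (merging two opposite pairs, merging opposite pairs into a $0$-block, merging an opposite pair into the $0$-block) to confirm that the product $\prod_{v}\beta(v)$ genuinely equals $M([\Pi],q)=\prod_{i=2}^n\bigl(\frac{2g_i}{h_i}+q(g_i-\frac{2g_i}{h_i})\bigr)$ — but that verification is exactly what the previous proposition already carries out, so here it suffices to cite it. Hence the proof is essentially a one-line assembly, and I would present it as such, perhaps adding the small numerical sanity check for $n=2$ or $n=3$ to parallel the type A example.
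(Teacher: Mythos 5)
Your proposal is correct and matches the paper's own (essentially one-line) derivation: the theorem is obtained by combining Equation~\eqref{classeq} for $W=B_n$, the degree formula $M(B_n,q)=\frac{n!}{2^n n!}\prod_{i=1}^n\bigl(2i+q(2n-2i)\bigr)=\prod_{i=1}^n\bigl(i+q(n-i)\bigr)$, and the preceding proposition together with the bijection between orbits of chains and pointed Andr\'e trees. Nothing further is needed.
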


For example, let $n=3$. We take the 5 elements of $\mathcal{A}^*_n$ as they appear in Figure~\ref{tree5} after we apply the
bijection $\mathcal{A}_{n+1}\to \mathcal{A}^*_n$, and we get:
\begin{align*}
  3(2+q)(1+2q) & = (1+q)(1+2q) + (1+q)(1+2q) + (1+2q) + \\ 
               & \qquad (1+2q) + (2+q)(1+2q).
\end{align*}

Strictly speaking, the identity in the previous theorem might be not considered as a hook-length formula since $\beta(v)$ 
does not depend only on the hook-length $h_v$. Still, it is on its own an interesting variant of the type A case.

\appendix

\section{Properties of the standard Coxeter elements}

\label{appen}

We sketch here a case-by-case proof of Propositions~\ref{standard1}. 
As we have seen above, the result is proved in the case where the longest element is central.
It remains only to prove the result for the infinite families $A_{n-1}$, $D_{n}$, 
and for the exceptional group $E_6$.

We shall use the notion of cyclic order and cyclic intervals. 
Recall that a sequence $i_1,\dots,i_n$ is {\it unimodal} if there is $k$ such that $i_1\leq i_2\leq\dots\leq i_k$ and 
$i_k\geq i_{k-1}\geq \dots \geq i_n$.

\subsection{\texorpdfstring{Case of $A_{n-1}$}{Case of A n-1}}

Let $W=A_{n-1}=\mathfrak{S}_n$, $V=\{v\in\mathbb{R}^{n} \, : \, \sum v_i = 0 \}$.
Let $S=\{s_1,\dots,s_{n-1}\}$, where $s_i$ acts by permuting the $i$th and $(i+1)$th
coordinates. As a permutation, $s_i$ is the simple transposition $(i,i+1)$.
Let $c=s_{\sigma(1)}\dots s_{\sigma(n-1)} $ be a standard Coxeter element.
By exchanging pairs of commuting generators, we can write $c$ as a product of $s_{n-1}$ with a standard Coxeter element 
of $A_{n-2}$ (where we do not specify the order of the product). By an easy induction, we see that we can write $c$ as 
the cycle $(i_1,\dots,i_{n})$ where $i_1,\dots,i_n$ is a unimodal sequence (and a permutation of $1,\dots,n$).

Any coatom of $\mathcal{P}^{NC}(A_{n-1},c)$ is a pair of cyclic intervals of the sequence $i_1,\dots,i_n$, 
complementary to each other, and the action of $c$ is the ``rotation'' along the cycle.
Two such coatoms are in the same $c$-orbit if and only if they have the same block sizes.
So, for each $k$ with $1\leq k < \frac n2$, there is an orbit containing 
complementary cyclic intervals of size $k$ and $n-k$. There are $n$ such partitions, and the interval
partitions among them are $1\dots k|k+1\dots n$ and $1\dots n-k|n-k+1\dots n$.
Additionally, if $n$ is even, there is an orbit containing two complementary cyclic intervals of size $\frac n2$.
There are $\frac n2$ such partitions, and the only interval partition among them is $1\dots \frac n2|\frac n2+1\dots n$.
Proposition~\ref{standard1} follows.


\subsection{\texorpdfstring{Case of $B_n$}{Case of B n}}

Proposition~\ref{standard1} was already proved in this case, since the longest element is central.
So the goal of this section is only to introduce some notation nedeed in the type D case (because
we see $D_n$ as a subgroup of $B_n$ in the standard way).
Let $W=B_n$ acting on $V=\mathbb{R}^n$.
The group $B_n$ is generated by $s_1,\dots,s_{n-1}$, i.e. generators of $A_{n-1}$,
together with another generator $s_0^B$. The latter acts as $(v_1,\dots,v_n)\mapsto (-v_1,v_2,\dots,v_n)$.
The simple roots are $-e_1$, together with $e_i-e_{i+1}$ for $1\leq i < n$.
We identify $B_n$ with the group of signed permutations, and $s_0^B$ is the transposition $(1,-1)$.
We use the notation $((a_1,\dots,a_n)) = (a_1,\dots,a_n)(-a_1,\dots,-a_n)$ and $[[a_1,\dots,a_n]] = (a_1,\dots,a_n,-a_1,\dots,-a_n)$
for the cycles of signed permutations.

\subsection{\texorpdfstring{Case of $D_n$}{Case of D n}}

The group $D_n$ is the subgroup of $B_n$ generated by $s_1,\dots,s_{n-1}$ together with another generator $s_0^D$.
The latter acts by the transformation
\[ v=(v_1,\dots,v_n) \mapsto (-v_2, -v_1, v_3, \dots, v_n).  \]
As a signed permutation, 
it is the transposition $((-1,2))$. The simple roots are $-e_1-e_2$, and $e_i-e_{i+1}$ for $1\leq i <n$.
By exchanging pairs of commuting generators, we can see that a standard Coxeter element $c$ of $D_n$ 
is a product of $s_0^D$ and a standard Coxeter element of $A_{n-1}$. So, either:
\[
  c = (1,-1) [[i_1,\dots,i_{n-1} ]]
\]
where $i_1,\dots,i_{n-1}$ form a unimodal sequence, and a permutation of $2,\dots,n$, or:
\[
  c = (2,-2) [[i_1,\dots,i_{n-1} ]]
\]
where $i_1,\dots,i_{n-1}$ form a unimodal sequence, and a permutation of $1,3,\dots,n$.
We only consider the first case, the other one being completely similar (it suffices to replace
each 1 with a 2 in the text).

We have four kinds of products $ct$ where $t$ is a reflection:
\begin{align*}
  c ((1,i_m)) &= ((1,i_{m+1},\dots,i_{n-1},-i_1,\dots,-i_m)), \\
  c ((-1,i_m)) &=  ((1,-i_{m+1},\dots,-i_{n-1},i_1,\dots,i_m)), \\
  c ((i_\ell,i_m)) &= (1,-1) [[i_1,\dots,i_{\ell},i_{m+1},\dots,i_{n-1}]]((i_{\ell+1},\dots,i_m )), \\
  c ((-i_\ell,i_m)) &= (1,-1)  [[i_{\ell+1},\dots,i_m]]  ((i_1,\dots,i_{\ell},-i_{m+1},\dots,-i_{n-1})).
\end{align*}

Using the notation for type B set partitions, we obtain from the list
above that the coatoms of $\mathcal{P}^{NC}(D_n,c)$ are:
\begin{itemize}
 \item $ 1 i_{m+1}\dots i_{n-1} \bar i_1 \dots \bar i_m  | \bar 1 \bar i_{m+1} \dots \bar i_{n-1} i_1 \dots i_m  $,
 \item $ \bar 1 i_{m+1}\dots i_{n-1} \bar i_1 \dots \bar i_m  |  1 \bar i_{m+1} \dots \bar i_{n-1} i_1 \dots i_m  $,
 \item $ 1 i_1\dots i_\ell i_{m+1} \dots i_{n-1} \bar 1 \bar i_1\dots \bar i_\ell \bar i_{m+1} \dots \bar i_{n-1} 
        |  i_{\ell+1}\dots i_m | \bar i_{\ell+1} \dots \bar i_m $,
 \item $ 1 i_{\ell+1}\dots i_m \bar 1 \bar i_{\ell+1} \dots \bar i_m | 
           i_1\dots i_\ell \bar i_{m+1} \dots \bar i_{n-1} | \bar i_1 \dots \bar i_\ell i_{m+1} \dots i_{n-1} $.
\end{itemize}
And the interval partitions among them are $1\dots n|\bar 1 \dots \bar n$, $1 \bar 2\dots \bar n|\bar 1 2 \dots n$, and
\[
  1\dots i \bar 1 \dots \bar i | i+1 \dots n  | \overline{i+1} \dots \bar n,
\]
where $2\leq i <n$.
From these explicit description, we can check Proposition~\ref{standard1}. We find that 
all orbits have size $\frac h2$ (here $h=2n-2$), except that 
$1\dots n|\bar 1 \dots \bar n$ and $1 \bar 2\dots \bar n|\bar 1 2 \dots n$ are in a same orbit of size $h$
if $n$ is even.



\subsection{\texorpdfstring{Case of $E_6$}{Case of E 6}}

This can be done with the following Sage program \cite{sage} (tested with Sage 5.4).

{\small
\begin{verbatim} 
W = WeylGroup(['E',6])
n = 6
h = 12

S = W.simple_reflections()
w0 = W.long_element()

def checkorbits(l):
  c = prod( S[i] for i in l )
  inte = []
  for i in range(1,n+1):
    inte.append( prod( S[j] for j in l if j!=i ) )
  for ct in inte:
    i=1; j=1; k= c * ct * c**(-1) ;
    while k != ct :
      i+=1
      if k in inte: 
        j+=1
        ct2 = k
      k = c * k * c**(-1)
    if not (((j==2) and (i==h)) or ((mod(h,2)==0) and (i==h/2) and (j==1))):
      raise TypeError('ERROR!!!') 
    if not (((j==2) and (ct2==w0*ct*w0)) or ((j==1) and (ct == w0*ct*w0))):
      raise TypeError('ERROR!!!')

for l in Permutations(n):
  checkorbits(l)
\end{verbatim}
}

\setlength{\parindent}{0mm}

\end{document}